
%
\documentclass[11pt]{article}%
\usepackage{hyperref}
\usepackage[ansinew]{inputenc}
\usepackage{amssymb}
\usepackage{amsmath}
\usepackage[english]{babel}
\usepackage{cases}
\usepackage{setspace}
\newtheorem{theorem}{Theorem}

\newtheorem{corollary}[theorem]{Corollary}

\newtheorem{definition}[theorem]{Definition}

\newtheorem{lemma}[theorem]{Lemma}

\newtheorem{proposition}[theorem]{Proposition}
\newtheorem{remark}[theorem]{Remark}

\newenvironment{proof}[1][Proof]{\textbf{#1.} }{\ \rule{0.5em}{0.5em}}
\hypersetup{
backref=true, 
pagebackref=true,
hyperindex=true, 
colorlinks=true, 
breaklinks=true, 
urlcolor= blue, 
linktoc=section
linkcolor= blue, 
bookmarks=true, 
}
\begin{document}

\title{On the pathwise uniqueness of solutions of one-dimensional stochastic differential equations with jumps}
\maketitle{}
\author{\begin{center} {M. Benabdallah, S. Bouhadou, Y. Ouknine }\footnote{Corresponding authors \\Email address:{\bf bmohsine@gmail.com}\\
Department of mathematics, Ibn Tofail University, Kenitra, B.P. 133, Morocco\\
 Email addresses: {\bf ouknine@ucam.ac.ma} (Y. Ouknine), {\bf sihambouhadou@gmail.com} \\ LIBMA Laboratory, Department of Mathematics, Faculty of Sciences Semlalia, Cadi Ayyad University, P.B.O. 2390 Marrakesh, Morocco.\\}$^{,}$ \footnote{This work is supported by Hassan II Academy of Sciences and Technology.\\
Email address:}
\end{center}}

\date{}
\begin{spacing}{1.1}
\begin{abstract}
We consider one-dimensional stochastic differential equations with jumps in the
general case. We introduce new technics based on local time and we prove new
results on pathwise uniqueness and comparison theorems. Our approach are very
easy to handled and don’t need any approximation approach. Similar equations
without jumps were studied in the same context by \cite{Le Gall}, \cite{Ouknine} and others authors.
As an application we get a new condition on the pathwise uniqueness for the
solutions to stochastic differential equations driven by a symmetric stable Lévy
processes.
\end{abstract}
Keywords:Semimartingales, local times, stochastic differential equation with jumps, symmetric stable Lévy
processes.

2000 Mathematics Subject Classification. 60H10, 60J60.
\section{Introduction}
Stochastic differential equations play a central role in the theory of stochastic processes and
are often used in the modeling of various random processes in nature. Often defined as strong solutions of stochastic differential equations, diffusion processes are
widely used in stochastic modeling e.g., Ornstein and Uhlenbeck \cite{Ornstein}
used their process for the analysis of velocity of a particle in a
fluid under the bombardment by
molecules. Samuelson \cite{samwe} introduced geometric Brownian motion for modeling the behavior of
financial markets. Also diffusions processes
appear e.g., in
stochastic population modeling. \\
In the recent years, jump processes were also used in many fields as flexible models to
describe various phenomena. In particular, they are frequently used in financial
modeling, it would be nice for the readers to refer to R. Cont
and P. Tankov \cite{Rama} and their reference for details about applications.
Barndorff-Nielsen \cite{bandroff} proposed the
idea for generalizing diffusion processes by means of changing the driving Wiener process by a Lévy process and defined the so-called background driven Ornstein-Uhlenbeck type process.\\
In the present paper, we consider stochastic differential equations driven by both a Wiener
process and a Poisson random measure, and study the question of pathwise uniqueness of this class
called stochastic differential equation with jumps:
\begin{equation}
 dX_t=\sigma(X_t)dW_t+ b(X_t)dt + \int F(X_{t-},z)(\mu-\nu)(dz,dt),\ \ 	X_0=x_0\label{1}
\end{equation}
In fact, the results on pathwise uniqueness of (\ref{1})
have been obtained under Lipschitz conditions, see Skorohod \cite{skoro}, Ikeda and Watanbe \cite{Ikeda}, Protter \cite{Protter}.
In absence of jumps, this SDE
$$dX_t=\sigma(X_t)dW_t+ b(X_t)dt $$
with non-Lipschitz coefficient were considered by several authors.
There were many works which discuss under which conditions  on $b$ and $\sigma$, we have the existence of strong solutions of stochastic differential equations. In the case when the equation is one-dimensional and $\sigma$ is not degenerated, several results have been obtained by Y. Ouknine \cite{Nakao}, \cite{fsemimarting}, \cite{oukref}. For SDE's which involve local times of unknown process, the most general result is given by  M. Rutkowski \cite{Rutkowski} where he showed the so called (LT) condition is sufficient to have pathwise uniqueness. So, the purpose of this paper is to give the analogue of this condition for the one dimensional SDE with jumps which concerns the couple of coefficient $\sigma$ and $F$.
\\

This paper is arranged as follows. In section $2$, we recall the definition of (LT) condition introduced by Barlow and Perkins \cite{Barlow}, and we introduce the new definition of local time condition $(\mathcal{L}T)$.In section $3$, we give some sufficient assumptions which ensure this condition. On the other hand, we investigate this definition to prove our mean result: Pathwise uniqueness of SDE (\ref{1}). Section $4$, is devoted to generalize Bass's result \cite{Bass 2}. In the same section we derive a generalization of comparison theorem proved by M. Benabdallah, S. Bouhadou and Y. Ouknine \cite{Benabd} in the case of SDE's without jumps. In the last section, we study
the pathwise uniqueness for stochastic differential equations
driven by spectrally positive Lévy noises which arise naturally
in the study of branching processes.

\section{Preliminaries}
On some stochastic basis $(\Omega,\mathcal{A},\Bbb{F}=(\mathcal{F}_t)_{t\geq 0},P)$, we consider one-dimensionnal $\Bbb{F}$-Brownian motion $W=(W_t)_{t\geq 0}$ and a $\Bbb{F}$-Poisson point process $\mu(ds,dy)$ in $(0,\infty)\times \Bbb {R}$. We suppose that $\mu$ and $W$ are independent. The random measure $\mu(ds,dy)$ has deterministic intensity
	\[\nu (ds,dy)=ds \lambda(dy)\ \ \ \ \mbox{on}\ \ \ (0,\infty)\times \Bbb {R}
\]
where $\lambda$ is Lebesgue measure on $\Bbb{R}$.\\

A solution of \eqref{1} is any process $X=(X_t)_{t\geq 0}$ on $(\Omega,\mathcal{A},\Bbb{F},P)$ satisfiying i) and ii) below:

\begin{enumerate}
	\item [i)]$X$ is adapted $\Bbb{F}$-adapted and càdlàg
	\item [ii)]
	\[X_t=X_0+\int_0^t \sigma(X_s)dW_s + \int_0^t b(X_s)ds +\int_0^t \int F(X_{s-},z)(\mu -\nu)(dz,ds)
\]
\end{enumerate}

We say that the pathwise uniqueness of solutions for \eqref{1} holds if whenever $X$ and $X'$ are any two solutions defined on the same stochastic basis $(\Omega,\mathcal{A},\Bbb{F},P)$ with the same $\Bbb{F}$-Brownian motion $W=(W_t)_{t\geq 0}$ and the same  $\Bbb{F}$-Poisson point process $\mu(ds,dy)$ such that $X_0=X'_0$ a.s., then $X_t=X'_t$ for all $t\geq 0$ a.s.\\

\mbox{}

We present (as in Protter  \cite {Protter}) the notion of local time of semimartingale. If $X$ is a general càdlàg semimartingale, let $\Delta X$ denote the process $\Delta X_t=X_t-X_{t-}$.\\
We recall the quadratic variation pocesses of $X$ is defined by
	\[\left[ X \right]_t=X_t^2-2\int_0^t X_{s-}dX_s
\]
The local time at $a$ of $X$, denoted $L_t^a=L^a(X)_t$ is defined to be the process given by
	
\begin{eqnarray}
	L_t^a&=&\left|X_t-a\right|-\left|X_0-a\right|-\int_0^t \mbox{sign}(X_{s-}-a)dX_s \ \ \ \ \ \ \ \ \ \ \ \ \ \ \ \ \ \ \ \ \ \ \ \ \ \ \ \ \ \ \ \ \ \ \nonumber \\
                    & & - \sum_{0<s\leq t}\left\{\left|X_s-a\right|-\left|X_{s-}-a\right|- \mbox{sign}(X_{s-}-a)\Delta X_s\right\}  \nonumber         	
\end{eqnarray}
where
$$
 \mbox{sign} (x)=\left\{\begin{array}{rl}
1 & \mbox{if}\ x>0 \\
-1 & \mbox{if}\ x\leq 0 \end{array}\right.
$$
The local time gives a generalization of Itô's formula: if $f$ is the difference of two convex functions and $f'$ is its left derivative and let $\mu$ be the signed measure which is the second derivative of $f$. Thus we have
\begin{eqnarray}
	f(X_t)&=&f(X_0) + \int_{0^+}^t f'(X_{s-})dX_s + \sum_{0<s\leq t}\left\{f(X_s)-f(X_{s-})- f'(X_{s-})\Delta X_s\right\} \nonumber \\
                    & & +\frac{1}{2}\int_{-\infty}^{\infty} L_t^a \mu(da) \nonumber         	
\end{eqnarray}

We introduce the local time slanted $\mathcal{L}_t^a(X)$ of a semimartingale $X$ by
	\[\mathcal{L}_t^a = \left|X_t-a\right|-\left|X_0-a\right|-\int_0^t \mbox{sign}(X_{s-}-a)dX_s
\]
We remark that if $f$ is the difference of two convex functions, we have
	\[f(X_t)= f(X_0) + \int_{0^+}^t f'(X_{s-})dX_s + \frac{1}{2}\int_{-\infty}^{\infty} \mathcal{L}_t^a \mu(da)
\]

Finally, we indicate the formula of occupation density, if $f$ is a bounded Borel measurable function, then, \textit{a.s.}
	\[\int_{-\infty}^{\infty} L_t^a f(a) da=\int_0^t f(X_{s-})d\left[ X \right]^c_s.
\]
where $\left[ X \right]^c$ is the continuous part of $\left[ X \right]$.\\
We apply this notation:\\
For all $x$ and $y$ in $\Bbb{R}$, $x\wedge y= \inf(x,y)$  and $x \vee y=\sup(x,y)$.\\
Now, we introduce two definitions of the (LT) condition, the first concerns the coefficient $\sigma$ uniquely and the second concerns the couple of coefficients $(\sigma, F)$ that will help us get the pathwise uniqueness of equation \eqref{1}.\\

\begin{definition}
We say that a  coefficient $\sigma $ of equation \eqref{1} satisfies (LT) condition if for two solutions $X^1$ and $X^2$ of (1) , then

\begin{equation}
 \forall t\geq 0\ \ L^0_t(X^1-X^2)=0 \label{2}
\end{equation}

\end{definition}

Now, we define the ($\mathcal{LT}$) condition concerning the couple of coefficient $(\sigma, F)$ of equation \eqref{1}.\\
\begin{definition}
We say that the  coefficients $(\sigma, F) $ of equation \eqref{1} satisfy ($\mathcal{LT}$) condition if for two solutions $X^1$ and $X^2$ of (1) , then

\begin{equation}
 \forall t\geq 0\ \ \mathcal{L}^0_t(X^1-X^2)=0 \label{3}
\end{equation}
\end{definition}

\mbox{}

We can remark that if the coefficients of \eqref{1} verify ($\mathcal{LT}$) condition then they verify the (LT) condition too which was used by several authors ( Le Gall \cite {Le Gall}, Ouknine \cite {Ouknine},...) and which permits to prove the pathwise uniqueness of solutions of one-dimensionnal stochastic differential equations without jumps.\\

We can apply our results to stochastic differential equation driven by symmetric stable processes studied by \cite {Bass 2}, taken by \cite {Bel}.\\

In the following, we prove that if  pathwise uniqueness holds for \eqref{1} before the first big jump(for example $\left|\Delta X\right|\geq 1$), then  pathwise uniqueness holds for every $t\geq 0$. This allows to consider equation \eqref{1} with only the small jumps.\\

\begin{lemma}

If we have pathwise uniqueness for equation \eqref{1} with only the small jumps, then pathwise uniqueness holds for equation \eqref{1} for general case.
\end{lemma}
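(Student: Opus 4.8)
The plan is an \emph{interlacing} argument: the ``big'' jumps occur at a discrete increasing sequence of stopping times, between which each solution obeys the small--jump dynamics, and one propagates the equality of two solutions from each big--jump instant to the next. To set this up I first decompose \eqref{1}. Writing $F=F\mathbf 1_{\{|F|<1\}}+F\mathbf 1_{\{|F|\geq 1\}}$ and separating the big part into its jumps and its compensator, any solution $X$ of \eqref{1} satisfies, on every interval free of big jumps, the \emph{small--jump equation}
\begin{equation*}
 dX_t=\sigma(X_t)\,dW_t+\tilde b(X_t)\,dt+\int F(X_{t-},z)\mathbf 1_{\{|F(X_{t-},z)|<1\}}(\mu-\nu)(dz,dt),
\end{equation*}
where $\tilde b(x)=b(x)-\int F(x,z)\mathbf 1_{\{|F(x,z)|\geq 1\}}\lambda(dz)$ collects the absolutely continuous compensator of the big jumps. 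Under the integrability hypotheses under which \eqref{1} is posed, $\lambda(\{z:|F(x,z)|\geq 1\})$ is finite and locally bounded in $x$, so along the (locally bounded, c\`adl\`ag) path $X$ the big jumps are a.s. finitely many on every compact interval; I denote their successive instants by $T_0=0<T_1<T_2<\cdots$, with $T_n\uparrow\infty$ a.s.

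Let $X$ and $X'$ be two solutions of \eqref{1} driven by the same $W$ and $\mu$ with $X_0=X_0'$. I will show by induction on $n$ that $X=X'$ on $[0,T_{n}]$ and that the $n$-th big--jump instants of the two solutions coincide. Suppose this holds up to $T_{n-1}$, and set $S=T_n^{X}\wedge T_n^{X'}$, the first big jump of \emph{either} solution after $T_{n-1}$. On $[T_{n-1},S)$ neither solution has a big jump, so both solve the small--jump equation starting from the common value $X_{T_{n-1}}=X'_{T_{n-1}}$. Applying the assumed pathwise uniqueness to the equation restarted at $T_{n-1}$ (the increments of $W$ and $\mu$ after $T_{n-1}$ again form an independent Brownian motion and Poisson measure), propagated up to the stopping time $S$, yields $X=X'$ on $[T_{n-1},S)$, hence $X_{S-}=X'_{S-}$. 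Because the left limits agree, the Poisson point $(S,z)$ realizing the minimum satisfies $|F(X_{S-},z)|\geq 1\iff|F(X'_{S-},z)|\geq 1$, so it is a big jump for both; thus $T_n^{X}=T_n^{X'}=S$ and $\Delta X_S=F(X_{S-},z)=F(X'_{S-},z)=\Delta X'_S$, whence $X_S=X'_S$. Letting $n\to\infty$ and using $T_n\uparrow\infty$ gives $X=X'$ on $[0,\infty)$.

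The main obstacle will be the localization step: the hypothesis is phrased for solutions on all of $[0,\infty)$, whereas I must apply it on the random interval $[T_{n-1},S)$. This requires that pathwise uniqueness for the small--jump equation be stable under restarting at the stopping time $T_{n-1}$ and that it propagate up to the stopping time $S$; both follow from the strong Markov property of the driving noise together with a standard stopping argument, but they are the points that genuinely need justification. The self--referential nature of the big--jump times (they are defined through $X_{s-}$, hence differ a priori for $X$ and $X'$) is handled by comparing only up to $S=T_n^{X}\wedge T_n^{X'}$ and deducing $T_n^{X}=T_n^{X'}$ inside the induction. Finally, the a.s.\ finiteness of big jumps on compacts and the finiteness of the big--jump compensator in $\tilde b$, routine given the integrability assumptions on $F$, are the remaining points to verify.
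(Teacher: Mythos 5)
Your proposal is correct and follows essentially the same route as the paper: an interlacing argument that iterates over the successive big--jump times, applies the small--jump pathwise uniqueness on each inter--jump interval via a restart of the driving noise at the preceding stopping time, and matches the two solutions across each big jump. You are in fact more careful than the paper on two points it glosses over --- the a priori mismatch of the big--jump times of the two solutions (handled by comparing up to the minimum $S=T_n^{X}\wedge T_n^{X'}$) and the justification of restarting at a stopping time --- but these are refinements of the same argument, not a different one.
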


\begin{proof}

Let $X$ and $Y$ two solutions of equation \eqref{1} with the same initial value. Let $S_1$ be the first time when the big jump happens($\left|\Delta X\right|\geq 1$), then we have
	\[X_t=Y_t\ \ \ \ \mbox{a.s.}\ \mbox{for}\ t\in \left[0,S_1\right)
\]
We have also $X_{S_1}=X_{S_1^-}+F(X_{S_1^-},\Delta X_{S_1})=Y_{S_1^-}+F(Y_{S_1^-},\Delta Y_{S_1})=Y_{S_1}$

We consider the filtration $\Bbb{F}^{S_1}=(\mathcal{F}_{S_1+t})_{t\geq 0}$, the $\Bbb{F}^{S_1}$-Brownian motion $W^{S_1}=(W_{S_1+t}-W_{S_1})_{t\geq 0}$ and a $\Bbb{F}^{S_1}$-Poisson point process $\mu^{S_1}(ds,dy)$  with intensity $\nu^{S_1}=ds\lambda(dy)$ on in $(0,\infty)\times \Bbb {R}$ defined by $\mu^{S_1}([0,t]\times \cdot)=\mu([S_1,S_1+t]\times \cdot)$. We consider the new equation:
\begin{equation}
 dX^{S_1}_t=\sigma(X^{S_1}_t)dW^{S_1}_t+ b(X^{S_1}_t)dt + \int F(X^{S_1}_{t-},z)(\mu^{S_1}-\nu^{S_1}),\ \ t\geq 0 	 \label{4}
\end{equation}
We consider the processes $X^{S_1}$ and $Y^{S_1}$ defined by $X^{S_1}_t=X_{S_1+t},t\geq 0$ and $Y^{S_1}_t=Y_{S_1+t},t\geq 0$
Then $X^{S_1}$ and $Y^{S_1}$ are solutions of \eqref{4} with the same condition initial and by hypothesis of pathwise uniqueness property of stochastis differential equation without the big jumps,they are equal until the first big jump when happens at time $S_2$. We have also
\[X^{S_1}_t=Y^{S_1}_t\ \ \ \ \mbox{a.s.}\ \mbox{for}\ t\in \left[0,S_2-S_1\right)
\]
This implies
\[X_t=Y_t\ \ \ \ \mbox{a.s.}\ \mbox{for}\ t\in \left[0,S_2\right)
\]
We repeat the same process, we obtain the pathwise uniqueness for equation \eqref{1} for the general case.
\end{proof}

Since in any finite time, the solution of \eqref{1} can have only a finite number of jumps of size greather than $1$. Then the large jumps do not affect the existence uniqueness of solutions and we work  uniquely with the small jumps.


\section{The pathwise uniqueness property for equation \eqref{1}}
\subsection{Main result}
Throughout this paragraph, we make the following assumption on the coefficients in equation \eqref{1}:\\
(A) the functions $\sigma$ and $b$ are measurables and bounded,\\
(B) $\left|b(x)- b(y)\right|\leq c \left|x-y\right|$ for all $x, y$,\\
(C) $\left|b(x)\right|^2 + \left|\sigma(x)\right|^2 + \int \left|F(x,z)\right|^2 \lambda(dz)\leq c (1 + \left|x\right|^2) $ for all $x$.

\mbox{}

Before giving the main result, we present a sufficent assumption to get the ($\mathcal{LT}$) condition (cf.Definition 2):

\begin{proposition}

Suppose that there exist a sequence of non-negative and twice continuously differentiable functions $\left\{\phi_n\right\}$ with the following properties:
\begin{enumerate}
	\item[(a)] $\phi_n (z) \uparrow \left|z\right|$ as $n\rightarrow \infty$;
	\item[(b)] $\left|\phi'_n(z)\right|\leq 1$ for all $z$;
	\item[(c)] $\phi''_n(z)\geq 0$ for $z \in \Bbb{R}$ and as $n\rightarrow \infty$,
	\[\phi''_n(x-y)\left[\sigma(x)-\sigma(y)\right]^2\rightarrow 0 \;\mbox{uniformly in  } \left|x\right|,\left|y\right|\leq m;
\]
  \item[(d)] as $n\rightarrow \infty$
	\[ \int \left[\phi_n (x+F(x,z)-y-F(y,z))-\phi_n (x-y)-\phi'_n (x-y)(F(x,z)-F(y,z))\right]\lambda(dz)\rightarrow 0
\]
$\mbox{uniformly in  } \left|x\right|,\left|y\right|\leq m;$
\end{enumerate}
then $(\sigma,F)$ verify ($\mathcal{LT}$) condition.
\end{proposition}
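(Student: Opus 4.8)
The plan is to apply the generalized It\^o--Meyer formula to the smooth approximations $\phi_n$ evaluated at the difference process $Z_t := X^1_t - X^2_t$, and then let $n\to\infty$, matching the left-hand side against the slanted local time $\mathcal{L}^0_t(Z)$ while driving the right-hand side to zero using (c) and (d). Since $X^1_0 = X^2_0$ we have $Z_0 = 0$, and from (a) together with $\phi_n\geq 0$ we get $\phi_n(0)=0$. Because (c) and (d) are only uniform on compacts, I first localize: set $\tau_m := \inf\{t : |X^1_t|\vee|X^2_t|\geq m\}$, carry out the argument on $[0,t\wedge\tau_m]$, and let $m\to\infty$ at the end, using that each solution is c\`adl\`ag, so $\tau_m\uparrow\infty$ a.s.

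First I would write $dZ_s = (\sigma(X^1_s)-\sigma(X^2_s))\,dW_s + (b(X^1_s)-b(X^2_s))\,ds + \int(F(X^1_{s-},z)-F(X^2_{s-},z))(\mu-\nu)(dz,ds)$ and, since each $\phi_n$ is $C^2$, apply the jump It\^o formula to get
\[
\phi_n(Z_t)-\phi_n(Z_0) = \int_0^t \phi_n'(Z_{s-})\,dZ_s + \tfrac12\int_0^t \phi_n''(Z_s)\,d[Z]^c_s + \sum_{0<s\leq t}\big\{\phi_n(Z_s)-\phi_n(Z_{s-})-\phi_n'(Z_{s-})\Delta Z_s\big\}.
\]
Writing $\delta F(s,z):=F(X^1_{s-},z)-F(X^2_{s-},z)$ and $g_n(s,z):=\phi_n(Z_{s-}+\delta F)-\phi_n(Z_{s-})-\phi_n'(Z_{s-})\delta F\geq 0$, and using $d[Z]^c_s=(\sigma(X^1_s)-\sigma(X^2_s))^2\,ds$, the jump sum equals $\int_0^t\!\int g_n(s,z)\,\mu(dz,ds)$. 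Thus the quantity $\phi_n(Z_t)-\phi_n(Z_0)-\int_0^t\phi_n'(Z_{s-})\,dZ_s$ equals the continuous second-order term plus $\int_0^t\!\int g_n\,\mu$.

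Next I pass to the limit on each side. On the left, (a) gives $\phi_n(Z_t)\to|Z_t|$ and $\phi_n(Z_0)\to 0$; since $\phi_n$ is convex with $\phi_n\uparrow|\cdot|$ one has $\phi_n'\to\mbox{sign}$ off the origin, so—using $|\phi_n'|\leq 1$, boundedness of $\sigma$ from (A), the growth bound (C) for the compensated jump integral, and the fact that $b(X^1_s)-b(X^2_s)$, $\sigma(X^1_s)-\sigma(X^2_s)$ and $\delta F$ all vanish on $\{Z_{s-}=0\}$—dominated convergence for stochastic integrals yields $\int_0^t\phi_n'(Z_{s-})\,dZ_s\to\int_0^t\mbox{sign}(Z_{s-})\,dZ_s$. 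Hence the left side tends to $\mathcal{L}^0_t(Z)$. On the right, the continuous term $\tfrac12\int_0^t\phi_n''(Z_s)(\sigma(X^1_s)-\sigma(X^2_s))^2\,ds\to 0$ by (c).

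The main obstacle is the jump term $\int_0^t\!\int g_n\,\mu$, which I would not treat by a naive interchange of limit and (possibly infinite) sum over jumps but through the compensator: $\int_0^t\!\int g_n\,\mu = \int_0^t\!\int g_n(\mu-\nu) + \int_0^t\!\int g_n\,\nu$. The absolutely continuous part $\int_0^t\!\int g_n(s,z)\lambda(dz)\,ds\to 0$ directly by (d). For the martingale part, the crucial observation is that (d) together with $g_n\geq 0$ forces the pointwise limit $g_\infty := |Z_s|-|Z_{s-}|-\mbox{sign}(Z_{s-})\Delta Z_s$ to vanish $\nu$-a.e. (by Fatou, $\int g_\infty\,\lambda(dz)\leq\liminf\int g_n\,\lambda(dz)=0$), so $g_n\to 0$ $\nu$-a.e.; combined with the domination $0\leq g_n\leq 2|\delta F|$ and $\int|\delta F|^2\lambda(dz)<\infty$ from (C), the It\^o isometry gives $\mathbb{E}[(\int_0^{t\wedge\tau_m}\!\int g_n(\mu-\nu))^2]=\mathbb{E}\int_0^{t\wedge\tau_m}\!\int g_n^2\,\nu\to 0$. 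Thus both pieces of the jump term vanish in the limit, yielding $\mathcal{L}^0_{t\wedge\tau_m}(Z)=0$; letting $m\to\infty$ gives $\mathcal{L}^0_t(Z)=0$, i.e. the $(\mathcal{LT})$ condition. I expect the localization bookkeeping at jump times and the justification of $\phi_n'\to\mbox{sign}$ to be routine, whereas the genuinely delicate point is this compensator argument showing that the compensated Poisson integral of $g_n$ disappears.
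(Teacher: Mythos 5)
Your proof is correct and follows essentially the same route as the paper's: the Yamada--Watanabe smoothing $\phi_n$, localization by $\tau_m$, It\^o's formula applied to $\phi_n(X^1-X^2)$, and conditions (c)--(d) to annihilate the second-order diffusion and jump terms. The only divergence is at the end: the paper passes to the limit in the compensated integral of $\phi_n(Z_{s-}+\delta F)-\phi_n(Z_{s-})$ and then observes that the resulting identity is Tanaka's formula missing a nonnegative increasing term that must therefore vanish, whereas you kill the compensated integral of $g_n$ directly via the Fatou plus It\^o-isometry argument --- a more explicit rendering of the step the paper states rather tersely.
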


\begin{proof}[Proof of the proposition 4 ]

Let two solutions $X^1$ and $X^2$ of \eqref{1} defined on the same stochastic basis $(\Omega,\mathcal{A},\Bbb{F},P)$ with the same $\Bbb{F}$-Brownian motion $W=(W_t)_{t\geq 0}$ and the same  $\Bbb{F}$-Poisson point process $\mu(ds,dy)$ such that $X^1_0=X^2_0$ a.s., we set:

\mbox{}

$
\begin{array}{l}
I_n(s)=\int \left\{\phi_n((X^1_{s-}-X^2_{s-})+\left[F(X^1_{s-},z)-F(X^2_{s-},z)\right])\right.\\
	 \\
	 -\left.\phi_n(X^1_{s-}-X^2_{s-})-\left[F(X^1_{s-},z)-F(X^2_{s-},z)\right]\phi'_n(X^1_{s-}-X^2_{s-})\right\}\lambda(dz) 	 
\end{array}
$

\mbox{}

\mbox{}

Let $\tau_m =\inf\left\{t\geq 0: \left|X_1(t)\right|\geq m \:\mbox{or} \:\left|X_2(t)\right|\geq m \right\}$.
By application of Itô formula, we have

\mbox{}

$
\begin{array}{lll}
	 \int^{t\wedge \tau_m}_{0}I_n(s)ds &=&\phi_n(X^1_{t\wedge \tau_m}-X^2_{t\wedge \tau_m})-\phi_n(0) \\
	 & & \\
	 & & -\int_0^{t\wedge \tau_m}\phi_n'(X^1_{s-}-X^2_{s-})(b(X^1_{s-})-b(X^2_{s-}))ds\\
	 & & \\
	 & & -\int_0^{t\wedge \tau_m}\phi_n'(X^1_{s-}-X^2_{s-})(\sigma(X^1_{s-})-\sigma(X^2_{s-}))dW_s\\
	 & & \\
	 & & -\frac{1}{2}\int_0^{t\wedge \tau_m}\phi_n''(X^1_{s-}-X^2_{s-})(\sigma(X^1_{s-})-\sigma(X^2_{s-}))^2ds\\
	 & & \\
	 & & -\int_0^{t\wedge \tau_m}\int \left[\phi_n(X^1_{s-}-X^2_{s-}+(F(X^1_{s-},z)-F(X^2_{s-},z)))\right.\\
	 & & \\
	 & &  \ \ \left.-\phi_n(X^1_{s-}-X^2_{s-}) \right ](\mu -\nu)(dz,ds) 	
\end{array}
$

\mbox{}
\mbox{}
\mbox{}

According to the assumptions $(a),(c),(d)$, the  term on the left hand side, the second and the fifth terms on the right hand side tend  to zero. We obtain:

\mbox{}

$
\begin{array}{lll}
	 0 &=&\left|(X^1_{t\wedge \tau_m}-X^2_{t\wedge \tau_m})\right| \\
	 & & \\
	 & & -\int_0^{t\wedge \tau_m}\mbox{sign}(X^1_{s-}-X^2_{s-})(b(X^1_{s-})-b(X^2_{s-}))ds\\
	 & & \\
	 & & -\int_0^{t\wedge \tau_m}\mbox{sign}(X^1_{s-}-X^2_{s-})(\sigma(X^1_{s-})-\sigma(X^2_{s-}))dW_s\\
	 & & \\
	 & & -\int_0^{t\wedge \tau_m}\int \left[\left|X^1_{s-}-X^2_{s-}+(F(X^1_{s-},z)-F(X^2_{s-},z))\right|\right.\\
	 & & \\
	 & &  \ \ \left.-\left|X^1_{s-}-X^2_{s-} \right|\right ](\mu -\nu)(dz,ds) 	
\end{array}
$

\mbox{}

 We note that this leads to the formula of Tanaka which lacks a term that can only be zero in this case. We obtain

	\[\left|X^1_{t\wedge \tau_m}-X^2_{t\wedge \tau_m}\right|-\int_0^{t\wedge \tau_m} \mbox{sign}(X^1_{s-}-X^2_{s-})d(X^1_s-X^2_s)=0
\]
	
Since $ \tau_m \rightarrow \infty$ as $m\rightarrow \infty$, we obtain
	
	\[\mathcal{L}_t^0(X^1-X^2) = \left|X^1_t-X^2_t\right|-\int_0^t \mbox{sign}(X^1_{s-}-X^2_{s-})d(X^1_s-X^2_s)=0
\]
Thus $(\sigma,F)$ verify ($\mathcal{LT}$) condition.
\end{proof}

\begin{theorem}
 If $(\sigma,F)$ verify ($\mathcal{LT}$) condition,
  then the solution to \eqref{1} is pathwise unique.
\end{theorem}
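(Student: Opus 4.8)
The plan is to run a Gronwall estimate driven entirely by the slanted local time, paralleling the proof of Proposition 4 but now taking $(\mathcal{LT})$ as the hypothesis rather than deriving it. Let $X^1$ and $X^2$ be two solutions of \eqref{1} on the same stochastic basis, with the same $W$, the same $\mu$, and $X^1_0=X^2_0$ a.s.; set $Z_t=X^1_t-X^2_t$, so $Z_0=0$ and
\begin{align*}
dZ_s &= (\sigma(X^1_{s-})-\sigma(X^2_{s-}))\,dW_s + (b(X^1_{s-})-b(X^2_{s-}))\,ds \\
&\quad + \int (F(X^1_{s-},z)-F(X^2_{s-},z))(\mu-\nu)(dz,ds).
\end{align*}
By the very definition of $\mathcal{L}^0_t$ and the hypothesis $\mathcal{L}^0_t(Z)=0$ for all $t$, I obtain the bare Tanaka identity
\[
|Z_t| = \int_0^t \mbox{sign}(Z_{s-})\,dZ_s,
\]
with no jump-correction and no local-time term surviving. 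This is exactly the simplification that $(\mathcal{LT})$ is designed to furnish, and it is the heart of the argument.

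Next I would insert the three pieces of $dZ_s$ and localize with $\tau_m=\inf\{t\ge 0:\ |X^1_t|\ge m\ \mbox{ or }\ |X^2_t|\ge m\}$. On $[0,t\wedge\tau_m]$ the integrand $\mbox{sign}(Z_{s-})(\sigma(X^1_{s-})-\sigma(X^2_{s-}))$ is bounded by (A), while $\int(F(X^1_{s-},z)-F(X^2_{s-},z))^2\lambda(dz)$ is controlled by the growth bound (C) together with $|X^i|\le m$; hence the Brownian integral and the compensated-jump integral are both square-integrable martingales on this interval, with zero expectation. Taking expectations in the stopped identity therefore eliminates everything but the drift:
\[
E\,|Z_{t\wedge\tau_m}| = E\!\int_0^{t\wedge\tau_m} \mbox{sign}(Z_{s-})\,(b(X^1_{s-})-b(X^2_{s-}))\,ds.
\]

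I would then invoke the Lipschitz assumption (B), $|b(x)-b(y)|\le c|x-y|$, to bound the right-hand side by $c\,E\int_0^{t\wedge\tau_m}|Z_{s-}|\,ds$; since the jump times form a Lebesgue-null set, $Z_{s-}$ may be replaced by $Z_s$ under the $ds$-integral, giving $g(t):=E|Z_{t\wedge\tau_m}|\le c\int_0^t g(s)\,ds$. Standard moment estimates under (C) guarantee $g(t)<\infty$, so Gronwall's lemma with $g(0)=0$ forces $g\equiv 0$, i.e. $Z_{t\wedge\tau_m}=0$ a.s. for every $t$. Letting $m\to\infty$ (so $\tau_m\to\infty$) and using right-continuity to pass from a countable dense set of times to all of them yields $X^1_t=X^2_t$ for all $t\ge 0$ a.s., which is pathwise uniqueness.

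The step I expect to be the main obstacle is the rigorous justification that the compensated-jump integral is a genuine martingale with vanishing expectation once stopped at $\tau_m$: this requires checking that its integrand lies in the appropriate $L^2(\nu)$ space on $[0,t\wedge\tau_m]$, which is precisely where the square-growth hypothesis (C) on $F$ and the cutoff at level $m$ do the work. The remaining subtleties—the convention $\mbox{sign}(0)=-1$ (harmless, since $|\mbox{sign}(\cdot)(b(X^1)-b(X^2))|\le c|Z|$ regardless of the value at $0$) and the replacement of $Z_{s-}$ by $Z_s$—are routine.
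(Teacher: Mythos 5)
Your proposal is correct and follows essentially the same route as the paper: the $(\mathcal{LT})$ condition reduces Tanaka's formula to $|Z_t|=\int_0^t \mbox{sign}(Z_{s-})\,dZ_s$, the Brownian and compensated-jump integrals are killed by taking expectations, the Lipschitz bound on $b$ controls the drift, and Gronwall finishes. Your added localization at $\tau_m$ to justify the martingale property is a welcome technical refinement of a step the paper merely asserts, but it is not a different argument.
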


Before proving theorem 5, we give several conditions which ensures $\mathcal{LT}$ condition in particular the coefficients can be discontinuous.

\begin{corollary}
 If $\sigma$ verify the following condition:
	\[\left|\sigma(x)-\sigma(y)\right|\leq h(\left|x-y\right|) \mbox{ for all } x,y
\]
  where $h:\left[ 0,\infty \right)\rightarrow \left[ 0,\infty \right)$ is continuous and nondecreasing, $h(0)=0,\\h(x)>0 $ for $x>0$, and
\[
  \int^{\epsilon}_{0}\frac{du}{h^2(u)}=\infty \mbox{ for every }\epsilon >0,
\]
and if
	$ x \rightarrow x + F(x,z)$ is nondecreasing in a neighborhood of $0$, $\lambda(dz)$ a.e.
then $(\sigma,F)$ verify ($\mathcal{LT}$) condition.
\end{corollary}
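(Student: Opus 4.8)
The plan is to build an explicit Yamada--Watanabe approximating sequence $\{\phi_n\}$ out of the modulus $h$ and then verify that it satisfies hypotheses (a)--(d) of Proposition 4; once this is done the corollary is immediate. Using $\int_0^\epsilon h^{-2}(u)\,du=\infty$ I would fix a strictly decreasing sequence $a_0=1>a_1>a_2>\cdots\downarrow 0$ determined by $\int_{a_n}^{a_{n-1}}h^{-2}(u)\,du=n$, choose continuous $\rho_n$ supported in $(a_n,a_{n-1})$ with $0\le\rho_n(u)\le \frac{2}{n\,h^2(u)}$ and $\int_{a_n}^{a_{n-1}}\rho_n(u)\,du=1$, and set $\phi_n(x)=\int_0^{|x|}\!\int_0^y\rho_n(u)\,du\,dy$. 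Then $\phi_n$ is even, of class $C^2$, convex, with $\phi_n''(x)=\rho_n(|x|)$, and the standard estimates give (a) $\phi_n(x)\uparrow|x|$, (b) $|\phi_n'|\le 1$, and $\phi_n''\ge 0$.

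For (c) I would combine the hypothesis on $\sigma$ with the bound on $\rho_n$: on the support of $\rho_n$ one has $|x-y|>a_n>0$, hence $h(|x-y|)>0$ and
\[
\phi_n''(x-y)\,[\sigma(x)-\sigma(y)]^2=\rho_n(|x-y|)\,[\sigma(x)-\sigma(y)]^2\le \frac{2}{n\,h^2(|x-y|)}\,h^2(|x-y|)=\frac{2}{n},
\]
a bound independent of $x,y$ that tends to $0$; off the support the left-hand side vanishes. Thus (c) holds uniformly in $|x|,|y|\le m$.

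The delicate step, and the one I expect to be the main obstacle, is (d). Writing $a=x-y$ and $\delta=F(x,z)-F(y,z)$, the integrand is $\Psi_n:=\phi_n(a+\delta)-\phi_n(a)-\phi_n'(a)\,\delta$, which is nonnegative by convexity and obeys the $n$-independent bound $\Psi_n\le 2|F(x,z)-F(y,z)|$. Since $\phi_n(z)\to|z|$ and $\phi_n'(z)\to\mbox{sign}(z)$, the pointwise limit is $\Psi_n\to|a+\delta|-|a|-\mbox{sign}(a)\,\delta$, and here the monotonicity assumption enters decisively: because $x\mapsto x+F(x,z)$ is nondecreasing, $a=x-y$ and $a+\delta=(x+F(x,z))-(y+F(y,z))$ always share their sign, so a short case check ($a>0$; $a<0$; and $a=0$, which forces $\delta=0$) shows the limit equals $0$. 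I would then pass to the limit by dominated convergence and conclude $\int\Psi_n\,\lambda(dz)\to 0$, after which Proposition 4 delivers the $(\mathcal{LT})$ condition. The two points genuinely requiring care are (i) producing a single $\lambda$-integrable dominating function valid for all $n$ and all $|x|,|y|\le m$, which is where the square-integrability in (C) and the reduction to small jumps in Lemma 3 must be used, and (ii) upgrading the convergence from pointwise in $(x,y)$ to uniform on $\{|x|,|y|\le m\}$, for which the range on which $x\mapsto x+F(x,z)$ is monotone must cover all pairs produced by the localization $\tau_m$.
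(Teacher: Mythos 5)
Your strategy --- manufacturing Yamada--Watanabe functions $\phi_n$ from $h$ and feeding them into Proposition 4 --- is not the paper's route, and the two caveats you flag at the end are not technical details to be filled in later: they are exactly where the argument breaks. Hypothesis (d) of Proposition 4 demands that $\int\Psi_n\,\lambda(dz)\to 0$ \emph{uniformly} on $\{|x|,|y|\le m\}$, and the corollary gives no quantitative control on $x\mapsto F(x,z)$ --- only that $x\mapsto x+F(x,z)$ preserves order. Take $F(x,z)=g(z)\mathbf{1}_{\{x\ge 0\}}$ with $g\ge 0$, $g\in L^2(\lambda)\setminus L^1(\lambda)$: the corollary's hypothesis on $F$ holds, your proposed dominating function $2|F(x,z)-F(y,z)|=2g(z)\mathbf{1}_{\{y<0\le x\}}$ is not $\lambda$-integrable, and choosing $y<0\le x$ with $x-y=a_n$ gives $\phi_n'(x-y)=0$, hence $\int\Psi_n\,d\lambda\ge\int\bigl(\phi_n(a_n+g)-\phi_n(a_n)\bigr)\,d\lambda$, which does not tend to $0$; so (d) simply fails. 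Even in the concrete stable-driven case of Section 5 ($F(x,z)=zG(x)$, $\lambda(dz)=z^{-1-\alpha}dz$, $\alpha\in(1,2)$), verifying (d) forces H\"older-type conditions on $G$ of the Li--Mytnik type, which the corollary is precisely designed to avoid. The corollary is therefore strictly stronger than what Proposition 4 can deliver and cannot be derived from it.

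The paper's proof avoids the $\lambda(dz)$-integral entirely: it applies the Tanaka formula to $(X^1_t-X^2_t)^+$, so that the jump contributions appear as the pathwise sums $\sum_{0<s\le t}\mathbf{1}_{(X^1_{s-}-X^2_{s-}>0)}(X^1_s-X^2_s)^-$ and $\sum_{0<s\le t}\mathbf{1}_{(X^1_{s-}-X^2_{s-}\le 0)}(X^1_s-X^2_s)^+$. Each summand is evaluated at an atom $(s,z)$ of $\mu$ and equals $\bigl[(X^1_{s-}+F(X^1_{s-},z))-(X^2_{s-}+F(X^2_{s-},z))\bigr]^-$ (resp.\ $[\cdot]^+$), which vanishes \emph{term by term} by the monotonicity of $x\mapsto x+F(x,z)$ --- no integrability in $z$ and no uniformity in $(x,y)$ is needed. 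What remains is $\tfrac12 L^0_t(X^1-X^2)$, which is killed by the Yamada--Watanabe condition on $\sigma$ via the occupation density formula (the classical Le Gall argument), and since $\mathcal{L}^0_t$ differs from $L^0_t$ exactly by those two jump sums, the $(\mathcal{LT})$ condition follows. Your sign analysis of $a$ and $a+\delta$ is the right way to exploit the monotonicity hypothesis; it just has to be applied pathwise at the jump times of $X^1-X^2$ rather than inside a $\lambda(dz)$-integral.
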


This corollary  generalizes the results of Fu-Li \cite{Fu} namely  theorem 3.2 and  3.3 because our assumption on $F$ is weaker than \cite{Fu}.

We present a second corollary in the spirit of Nakao (LeGall\cite{Le Gall}) result but for discontinuous SDE.

\begin{corollary}
If $\sigma$ verifies the following condition:
\[
 \left|\sigma(x)-\sigma(y)\right|^2\leq \left|f(x)-f(y)\right| \mbox{ for all } x,y
\]
where $f: \Bbb{R} \rightarrow  \Bbb{R}$ is nondecreasing, and there exist $\epsilon > 0$ such that
\[
  \sigma (x) > \epsilon  \mbox{ for every } x \in \Bbb{R},
\]
and if
	$ x \rightarrow x + F(x,z)$ is nondecreasing in a neighborhood of $0$, $\lambda(dz)$ a.e.
then $(\sigma,F)$ verify ($\mathcal{LT}$) condition.
\end{corollary}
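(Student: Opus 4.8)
The plan is to establish the conclusion directly, by showing $\mathcal{L}_t^0(Z)=0$ for $Z:=X^1-X^2$, rather than by verifying the hypotheses of Proposition 4. Indeed, for a genuinely discontinuous $\sigma$ the pointwise-uniform requirement (c) cannot be met: near a one-sided jump of $f$ one finds $|x|,|y|\le m$ with $x-y$ arbitrarily small yet $(\sigma(x)-\sigma(y))^2$ bounded below, while any $\phi_n$ with $\phi_n\uparrow|\cdot|$ must have $\phi_n''$ blowing up near $0$. So the Nakao-type hypothesis has to be exploited through the occupation density formula instead. First I would invoke Lemma 3 to reduce to the case of small jumps. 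Comparing the two local-time formulas of Section 2, I record the decomposition
\[\mathcal{L}_t^0(Z)=L_t^0(Z)+\sum_{0<s\le t}\bigl\{|Z_s|-\mathrm{sign}(Z_{s-})\,Z_s\bigr\},\]
in which $L_t^0(Z)\ge 0$ and every summand is $\ge 0$ by convexity of $|\cdot|$; hence it suffices to prove that each of the two pieces vanishes.

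For the jump sum I would use the hypothesis on $F$. At a jump time $s$ carrying mark $z$ one has $Z_s=g_z(X^1_{s-})-g_z(X^2_{s-})$ with $g_z(x)=x+F(x,z)$, and the reduction to small jumps means only marks $z$ in a neighbourhood of $0$ occur. Since $g_z$ is nondecreasing there for $\lambda$-a.e.\ such $z$, the sign of $Z$ is preserved across every jump, so $\mathrm{sign}(Z_{s-})\,Z_s=|Z_s|$ and each summand is exactly $0$. This is the same mechanism that makes condition (d) of Proposition 4 vanish in the limit: with $u=x-y$ and $v=g_z(x)-g_z(y)$, the convexity defect $\phi_n(v)-\phi_n(u)-\phi_n'(u)(v-u)$ tends to $|v|-\mathrm{sign}(u)\,v=0$ precisely because $u$ and $v$ share the same sign.

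For the continuous part I would run Le Gall's treatment of Nakao's theorem \cite{Le Gall}. The continuous bracket is $d[Z]^c_s=(\sigma(X^1_{s-})-\sigma(X^2_{s-}))^2\,ds$, and right-continuity of $a\mapsto L_t^a$ gives $L_t^0(Z)=\lim_{\delta\downarrow 0}\frac1\delta\int_0^\delta L_t^a(Z)\,da$. The occupation density formula then yields
\[L_t^0(Z)=\lim_{\delta\downarrow 0}\frac1\delta\int_0^t \mathbf{1}_{(0,\delta]}(Z_{s-})\,(\sigma(X^1_{s-})-\sigma(X^2_{s-}))^2\,ds,\]
and on $\{0<Z_{s-}\le\delta\}$ one has $X^1_{s-}>X^2_{s-}$, so the integrand is dominated by $f(X^1_{s-})-f(X^2_{s-})\ge 0$ since $f$ is nondecreasing. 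Writing $f(X^1_{s-})-f(X^2_{s-})=\int \mathbf{1}_{(X^2_{s-},X^1_{s-}]}(a)\,df(a)$ and applying Fubini recasts the estimate as the joint occupation, by $X^1$ and $X^2$ simultaneously, of $\delta$-neighbourhoods of each level $a$; here the lower bound $\sigma\ge\epsilon$ enters through $ds\le\epsilon^{-2}\,d[X^1]^c_s$, trading Lebesgue time for the occupation measure of $X^1$.

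The hard part will be this last step: showing that the normalised joint occupation is $o(1)$, i.e.\ that $\frac1\delta\int_0^t\mathbf{1}_{(0,\delta]}(Z_{s-})\,(f(X^1_{s-})-f(X^2_{s-}))\,ds\to 0$. A crude bound keeping only the constraint $X^1_{s-}\in[a,a+\delta)$ loses a power of $\delta$ and gives merely $O(1)$; the required gain comes from retaining the full constraint that $X^1_{s-}$ and $X^2_{s-}$ both lie within $\delta$ of the level $a$ while $Z_{s-}>0$, which is exactly where the nondegeneracy $\sigma\ge\epsilon$ and the monotonicity of $f$ must be combined, following Le Gall. Once $L_t^0(Z)=0$ is secured, the decomposition above gives $\mathcal{L}_t^0(Z)=0$ for all $t$, which is the $(\mathcal{LT})$ condition.
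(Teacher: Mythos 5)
Your proposal is correct and follows essentially the same route as the paper: the paper's proof of this corollary likewise splits $\mathcal{L}^0_t(X^1-X^2)$ into the continuous local time $L^0_t$, annihilated by the Nakao-type condition on $\sigma$ (for which the paper, too, merely defers to Le Gall and Engelbert--Schmidt), plus the jump contribution, annihilated by the monotonicity of $x\mapsto x+F(x,z)$ exactly as in the proof of Corollary 6. Your write-up is in fact more explicit than the paper's one-sentence argument, and the step you flag as the hard part --- completing the occupation-density estimate that gives $L^0_t(X^1-X^2)=0$ --- is left equally unproved there.
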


\begin{proposition}
If $\sigma$  satisfies (LT) condition and $F$ verifies the following condition:
\begin{equation}
 \int \left|F(x,z)- F(y,z)\right|\lambda(dz)\leq c \left|x-y\right| \mbox{ for all } x,y \label{5}
\end{equation}	
then the solution to \eqref{1} is pathwise unique.
\end{proposition}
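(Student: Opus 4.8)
The plan is to reduce pathwise uniqueness to a Gronwall estimate on $E|X^1_t - X^2_t|$, along the lines of the proof of Proposition 4, but now replacing the pointwise control of $\sigma$ by the hypothesis that $\sigma$ satisfies the (LT) condition and exploiting the $L^1$-Lipschitz control \eqref{5} of $F$. Set $Z = X^1 - X^2$, so $Z_0 = 0$ and
$$dZ_s = (\sigma(X^1_{s-}) - \sigma(X^2_{s-}))\,dW_s + (b(X^1_{s-}) - b(X^2_{s-}))\,ds + \int (F(X^1_{s-},z) - F(X^2_{s-},z))(\mu - \nu)(dz,ds).$$
First I would apply the Tanaka-Meyer formula recalled in Section~2 (with $a=0$) to $|Z_t|$, obtaining
$$|Z_t| = \int_0^t \mathrm{sign}(Z_{s-})\,dZ_s + L^0_t(Z) + \sum_{0<s\le t}\big\{|Z_s| - |Z_{s-}| - \mathrm{sign}(Z_{s-})\Delta Z_s\big\}.$$
Since $\sigma$ satisfies the (LT) condition, the continuous local time term vanishes, $L^0_t(Z) = 0$ for all $t\ge 0$.

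The key algebraic step is to recombine the compensated jump integral hidden inside $\int_0^t \mathrm{sign}(Z_{s-})\,dZ_s$ with the Tanaka jump-correction sum. Writing $\Delta(s,z) = F(X^1_{s-},z) - F(X^2_{s-},z)$ and expressing the sum as an integral against $\mu$, the $\mathrm{sign}(Z_{s-})\Delta(s,z)$ contributions cancel and one is left with $\int_0^t\!\int\{|Z_{s-}+\Delta(s,z)| - |Z_{s-}|\}\,\mu(dz,ds)$. Splitting $\mu = (\mu-\nu)+\nu$ then isolates a compensated (martingale) part plus the compensator integral $\int_0^t\!\int\{|Z_{s-}+\Delta| - |Z_{s-}| - \mathrm{sign}(Z_{s-})\Delta\}\,\lambda(dz)\,ds$. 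I would localize with $\tau_m = \inf\{t: |X^1_t|\ge m \text{ or } |X^2_t|\ge m\}$, take expectations at $t\wedge\tau_m$, and use the boundedness of $\sigma$ from (A) together with the growth bound (C) to conclude that both the Brownian and the compensated-Poisson martingales have zero mean.

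What remains is to dominate the two surviving (nonmartingale) terms by $E|Z|$. The drift is handled by the Lipschitz assumption (B): $|\mathrm{sign}(Z_{s-})(b(X^1_{s-}) - b(X^2_{s-}))| \le c|Z_{s-}|$. For the jump compensator I would invoke the elementary convexity inequality $0 \le |a+b| - |a| - \mathrm{sign}(a)\,b \le 2|b|$, valid for all real $a,b$, which bounds the integrand by $2|\Delta(s,z)|$ and hence, after integrating in $z$, by $2\int |F(X^1_{s-},z) - F(X^2_{s-},z)|\,\lambda(dz) \le 2c|Z_{s-}|$ by \eqref{5}. Combining these gives $E|Z_{t\wedge\tau_m}| \le 3c\int_0^t E|Z_{s\wedge\tau_m}|\,ds$, so Gronwall's lemma forces $E|Z_{t\wedge\tau_m}| = 0$; letting $m\to\infty$ (using $\tau_m\to\infty$) and invoking right-continuity of the paths yields $X^1_t = X^2_t$ for all $t$ almost surely.

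I expect the main obstacle to lie in the jump bookkeeping: making the cancellation between $\int \mathrm{sign}(Z_{s-})\Delta\,(\mu-\nu)$ and the Tanaka jump-correction sum rigorous, and checking that the resulting compensated integral is a genuine martingale rather than merely a local one once $\tau_m$-localization is in force, which is precisely where the square-integrability afforded by (C) is needed. The convexity inequality itself is routine, but one should verify that the paper's convention $\mathrm{sign}(0) = -1$ does not destroy the nonnegativity $|a+b| - |a| - \mathrm{sign}(a)b \ge 0$ at $a=0$; it does not, since there the expression equals $|b| + b \ge 0$.
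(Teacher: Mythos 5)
Your proposal is correct and follows essentially the same route as the paper's proof: Tanaka's formula, the (LT) condition to kill $L^0_t(X^1-X^2)$, the Lipschitz bound on $b$ for the drift, compensation of the jump terms so that condition \eqref{5} yields a bound by $c\,\Bbb{E}|X^1_s-X^2_s|$, and Gronwall. The only differences are cosmetic bookkeeping (you cancel the $\mathrm{sign}(Z_{s-})\Delta$ terms before compensating, whereas the paper bounds the raw jump-correction sum by $2\int\!\int|F(X^1_{s-},z)-F(X^2_{s-},z)|\,\mu(ds,dz)$ and then splits $\mu=(\mu-\nu)+\nu$), plus your explicit localization, which the paper omits.
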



\subsection{Proofs of results}

In the subsection, we prove the results of the subsection below.

\begin{proof}[Proof of the  Theorem 5]

The main idea is to use the Tanaka formula. Let $X^1$ and $X^2$ be two solutions to \eqref{1},  then
\begin{equation}
\begin{array}{lll}
	\left|X^1_t-X^2_t\right|&=&\int^{t}_{0}\mbox{sign}(X^1_{s-}-X^2_{s-})d(X^1_{s}-X^2_{s})\\
		\label{6}       	
\end{array}
\end{equation}
Now, we have
\begin{equation}
\begin{array}{lll}
	d(X^1_{t}-X^2_{t})&=&(\sigma (X^1_t)-\sigma(X_t^2))dW_t + (b(X^1_t)-b(X_t^2))dt \\
	& &\\
	& &+ \int (F(X^1_{t-},z)-F(X^2_{t-},z))(\mu - \nu )(dt,dz)   \label{7}    	
\end{array}
\end{equation}

If we substitute \eqref{7} in \eqref{6}, we obtain

\mbox{}

$\begin{array}{lll}
\left|X^1_t-X^2_t\right|&=&\int^{t}_{0}\mbox{sign}(X^1_{s-}-X^2_{s-})(\sigma (X^1_s)-\sigma(X_s^2))dW_s \\& & \\
&+& \int^{t}_{0}\mbox{sign}(X^1_{s-}-X^2_{s-})(b(X^1_s)-b(X_s^2))ds\\ & & \\
&+& \int^{t}_{0}\mbox{sign}(X^1_{s-}-X^2_{s-})\int (F(X^1_{s-},z)-F(X^2_{s-},z))(\mu - \nu )(ds,dz) \\& & \\
\end{array}
$
The first  and the third terms on the right hand sides are martingales and all terms on the right hand side are integrable.\\
We can prove, if we note $I_2(t)$ the second term on the right hand side and  using the fact that $b$ is lipschitz, that
\begin{equation}
	I_2(t)\leq c\int_0^t\left|X^1_s-X^2_s\right|ds \label{8}
\end{equation}
Finally we obtain

	\[ \Bbb{E}\left(\left|X^1_t-X^2_t\right|\right) \leq  c \left(\int^{t}_{0}\Bbb{E}\left|X^1_s-X^2_s\right|ds\right)
\]

and Gronwall lemma implies that $X^1\equiv X^2$.
\end{proof}
\medskip

\begin{proof}[Proof of the corollary 6]

The main idea is to use  the Tanaka formula but with the function \\$x\rightarrow x^+$. Let $X^1$ and $X^2$ two solutions to \eqref{1},  then

\mbox{}

\begin{eqnarray}
	\left(X^1_t-X^2_t\right)^{+}&=&\int^{t}_{0}\textbf{1}_{(X^1_{s-}-X^2_{s-}>0)}d(X^1_{s}-X^2_{s})  \nonumber \\
  &+&  \sum_{0<s\leq t}\textbf{1}_{(X^1_{s-}-X^2_{s-}>0)}\left(X^1_{s}-X^2_{s}\right)^{-} \nonumber \\
&+& \sum_{0<s\leq t}\textbf{1}_{(X^1_{s-}-X^2_{s-}\leq 0)}\left(X^1_{s}-X^2_{s}\right)^{+}\nonumber \\
& + & \frac{1}{2}L^0_t(X^1-X^2) \nonumber         	
\end{eqnarray}

\mbox{}

In order that the ($\mathcal{LT}$) condition be realised, we can prove that the second and the third term on the right hand side is zero. We prove uniquely the second term, the other is proved by  the same method. We can suppose that $x\rightarrow F(x,z)+ x$ is nondecreasing in a neighborhood of $0, \lambda(dz) a.e.$ .
We have

\mbox{}

\begin{equation}
\begin{array}{l}
	\sum_{0<s\leq t}\textbf{1}_{(X^1_{s-}-X^2_{s-}>0)} \left(X^1_{s}-X^2_{s}\right)^{-}\\
	\\
=	\sum_{0<s\leq t}\textbf{1}_{(X^1_{s-}-X^2_{s-}>0)} \left(\Delta X^1_{s}-\Delta X^2_{s}+ X^1_{s-}-X^2_{s-}\right)^{-} \\
	\\
 =\int _0^t\int \textbf{1}_{(X^1_{s-}-X^2_{s-}>0)}\left[(F(X^1_{s-},z)+ X^1_{s-})-(F(X^2_{s-},z)+X^2_{s-})\right]^{-}\mu(ds,dz)
\label{9}        	
\end{array}
\end{equation}

\mbox{}

Since $x+F(x,z)$ is nondecreasing, the right hand size of \eqref{9} is  $0$.
We obtain the result of the first  corollary.
\end{proof}
\medskip

\begin{proof}[Proof of the corollary 7]

The proof of the second corollary is similar to what we have seen previously, this  is due to the hypothesis for $\sigma$ which implies the (LT) condition and the hypothesis for $F$ is identical to that of the first corollary. We can see Engelbert-Schmidt \cite {Eng1,Eng2,Eng3} .  	
\end{proof}
\medskip

\mbox{}

\begin{proof}[Proof of the proposition 8]

We use the Tanaka formula. Let $X^1$ and $X^2$ two solutions to \eqref{1},  then
\begin{equation}
\begin{array}{lll}
	\left|X^1_t-X^2_t\right|&=&\int^{t}_{0}\mbox{sign}(X^1_{s-}-X^2_{s-})d(X^1_{s}-X^2_{s})\\& &\\ & &+ \sum_{0<s\leq t}\left\{\left|X^1_{s}-X^2_{s}\right|-\left|X^1_{s-}-X^2_{s-}\right|\right.\\ & &\\& &\left.-\mbox{sign}(X^1_{s-}-X^2_{s-})\Delta (X^1-X^2)\right\}
	\label{10}       	
\end{array}
\end{equation}
Now, we have
\begin{equation}
\begin{array}{lll}
	d(X^1_{t}-X^2_{t})&=&(\sigma (X^1_t)-\sigma(X_t^2))dW_t + (b(X^1_t)-b(X_t^2))dt \\
	& &\\
	& &+ \int (F(X^1_{t-},z)-F(X^2_{t-},z))(\mu - \nu )(dt,dz)   \label{11}    	
\end{array}
\end{equation}

If we substitute \eqref{11} in \eqref{10}, we obtain

\mbox{}

$\begin{array}{lll}
\left|X^1_t-X^2_t\right|&=&\int^{t}_{0}\mbox{sign}(X^1_{s-}-X^2_{s-})(\sigma (X^1_s)-\sigma(X_s^2))dW_s \\& & \\
&+& \int^{t}_{0}\mbox{sign}(X^1_{s-}-X^2_{s-})(b(X^1_s)-b(X_s^2))ds\\ & & \\
&+& \int^{t}_{0}\mbox{sign}(X^1_{s-}-X^2_{s-})\int (F(X^1_{s-},z)-F(X^2_{s-},z))(\mu - \nu )(ds,dz) \\& & \\
&+& \sum_{0<s\leq t}\left\{\left|X^1_{s}-X^2_{s}\right|-\left|X^1_{s-}-X^2_{s-}\right|-\mbox{sign}(X^1_{s-}-X^2_{s-})\Delta (X^1-X^2)\right\}\\ & &
\end{array}
$
The first  and the third terms on the right hand side are martingales and all terms on the right hand side are integrable.\\
We can prove, if we note $I_2(t)$ the second term on the right hand side and  using the fact that $b$ is lipschitz, that
\begin{equation}
	I_2(t)\leq c\int_0^t\left|X^1_s-X^2_s\right|ds \label{12}
\end{equation}

We treat the fourth term, denoted $I_4(t)$, by the following calculation:

\mbox{}

$\begin{array}{lll}
\left|I_4(t)\right|& \leq & \sum_{0<s\leq t}\left|\left|X^1_{s}-X^2_{s}\right|-\left|X^1_{s-}-X^2_{s-}\right|\right|+\left|\Delta (X^1-X^2)\right| \\ & & \\
& \leq & 2\int^{t}_{0}\int \left|F(X^1_{s-},z)-F(X^2_{s-},z)\right|\mu(ds,dz)\\ & & \\
&=& 2\int^{t}_{0}\int \left|F(X^1_{s-},z)-F(X^2_{s-},z)\right|(\mu-\nu)(ds,dz)\\ & & \\
&+& 2\int^{t}_{0}\int \left|F(X^1_{s-},z)-F(X^2_{s-},z)\right|\nu(ds,dz)\\ & & \\
\end{array}
$

\mbox{}

Taking expectation in the two sides and using the  martingales property and \eqref{5}, we obtain

\mbox{}

$\begin{array}{lll}
\Bbb{E}\left(\left|I_4(t)\right|\right)& \leq & c \Bbb{E}\left(\int^{t}_{0}ds\int \left|F(X^1_{s-},z)-F(X^2_{s-},z)\right|\lambda(dz)\right)\\ & & \\

\end{array}
$

\mbox{}
and hence , we have
\begin{equation}
	\Bbb{E}\left(\left|I_4(t)\right|\right)\leq c \int^{t}_{0}\Bbb{E}\left( \left|X^1_{s}-X^2_{s}\right|\right)ds \label{13}
\end{equation}

By using \eqref{6}, \eqref{13} and Gronwall lemma, we obtain the result of theorem.
\end{proof}
\medskip

\begin{remark}
The proof of the proposition 6 is simpler than the one proposed by Höpfner \cite {Hopfner} especially for the last term.
\end{remark}

\begin{remark}
We can prove the theorem with a weaker hypothesis for $b$ and $F$
\[\left|b(x)- b(y)\right|^2+\int \left|F(x,z)- F(y,z)\right|^2 \lambda(dz)\leq  h^2(\left|x-y\right|)
\]
where $h$ verifies the same assumptions as in corollary 5.

\end{remark}


\section{Others results}

\begin{theorem}

 If $(\sigma,F)$ verify ($\mathcal{LT}$) condition and $b$ is measurable and bounded. Then the uniqueness in the sense of probability law  implies the pathwise uniqueness.
\end{theorem}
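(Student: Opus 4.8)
The plan is to run the same Tanaka-formula machinery as in the proof of Theorem 5, and to isolate precisely the point where the weakened hypothesis on $b$ forces us to bring in weak uniqueness. So I would start from two solutions $X^1,X^2$ of \eqref{1} carried by the same stochastic basis, with the same $W$, the same $\mu$ and $X^1_0=X^2_0$; by the reduction Lemma I may assume only small jumps occur. Applying the Tanaka formula to $X^1-X^2$ and using that $(\sigma,F)$ satisfy the $(\mathcal{LT})$ condition (so that $\mathcal{L}^0_t(X^1-X^2)=0$ and \emph{no} local-time term survives) expresses $\left|X^1_t-X^2_t\right|$ as the sum of a Brownian stochastic integral, a compensated-Poisson integral, and the drift integral $\int_0^t\mbox{sign}(X^1_{s-}-X^2_{s-})(b(X^1_s)-b(X^2_s))\,ds$. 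Since $\sigma$ is bounded by (A) and $F$ obeys the $L^2$-bound (C), the first and third are genuine martingales, and taking expectations leaves
\[\mathbb{E}\left|X^1_t-X^2_t\right|=\mathbb{E}\int_0^t\mbox{sign}(X^1_{s-}-X^2_{s-})\left(b(X^1_s)-b(X^2_s)\right)ds .\]

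Controlling this signed drift integral is the entire difficulty. Unlike in Theorem 5, $b$ is only bounded and measurable, so the Lipschitz estimate \eqref{8} and the ensuing Gronwall step are unavailable to absorb the right-hand side, and this is exactly where uniqueness in law must enter. It is worth noting \emph{why} the naive route fails: one is tempted to kill $b$ by an equivalent (Girsanov) change of measure, but such a change can be anchored to only one of the two solutions and therefore cannot simultaneously render both $X^1$ and $X^2$ driftless, so it does not symmetrize the comparison. This structural asymmetry is the reason a merely pointwise hypothesis on $b$ does not suffice and a \emph{global} weak-uniqueness assumption is the right input. My plan is therefore to approximate: choose Lipschitz $b_n\to b$ ($\lambda$-a.e., with $\|b_n\|_\infty\le\|b\|_\infty$). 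For each $n$ the coefficients $(\sigma,b_n,F)$ satisfy (A)--(C) with $b_n$ Lipschitz, and, crucially, $(\mathcal{LT})$ is a condition on the pair $(\sigma,F)$ alone (the sufficient conditions (a)--(d) of Proposition 4 do not involve $b$), so it persists. Hence Theorem 5 yields pathwise uniqueness for the $b_n$-equation, and the Yamada--Watanabe theorem then provides, on the given $(W,\mu,X_0)$, a unique strong solution $X^{(n)}$ together with uniqueness in law.

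The final and hardest step is to pass to the limit and force the joint law of $(X^1,X^2)$ onto the diagonal. I would establish tightness of the family $(X^{(n)},X^1,X^2,W,\mu)$, extract subsequential limits in law, and use the assumed uniqueness in law of \eqref{1} to identify every limit of $X^{(n)}$ with the common law of $X^1$ and of $X^2$; combined with the $(\mathcal{LT})$-identity displayed above, which already removes the local-time obstruction, this should concentrate the limiting joint law on $\{X^1=X^2\}$ in the spirit of a Gyöngy--Krylov argument, upgrading convergence in law to convergence in probability and hence giving $X^1=X^2$ a.s. The main obstacle I anticipate is precisely the uniform control, under this approximation and with only boundedness and measurability of $b$, of the expected signed drift term $\mathbb{E}\int_0^t\mbox{sign}(X^1_{s-}-X^2_{s-})(b(X^1_s)-b(X^2_s))\,ds$: showing that the perturbation $b_n-b$ contributes negligibly in the limit, despite the absence of any modulus of continuity for $b$, is where uniqueness in law does the decisive work and where the estimates are most delicate.
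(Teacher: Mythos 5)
Your Tanaka identity $\mathbb{E}\left|X^1_t-X^2_t\right|=\mathbb{E}\int_0^t \mathrm{sign}(X^1_{s-}-X^2_{s-})\left(b(X^1_s)-b(X^2_s)\right)ds$ is correct, but the route you then take does not close, and you say so yourself: the ``final and hardest step'' is left as a plan. The concrete obstruction is that uniqueness in law for \eqref{1} constrains only the marginal law of a single solution; it says nothing about the joint law of the pair $(X^1,X^2)$, which is what a Gy{\"o}ngy--Krylov argument needs. In that scheme one must show that every weak limit of the joint laws of pairs of approximations is carried by the diagonal, and that step is normally supplied by a pathwise-uniqueness-type estimate --- exactly what is to be proved, so the plan is circular at its decisive point. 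Moreover there is no reason the strong solutions $X^{(n)}$ of the $b_n$-equations should converge to the given $X^1$ or $X^2$ in any pathwise sense, so identifying their limits in law with the common law of $X^1$ and $X^2$ cannot yield $X^1=X^2$. The Lipschitz approximation of $b$ is a red herring: no modulus of continuity for $b$ is available, and none is needed.

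The paper's argument uses the hypotheses quite differently and is short. Apply Tanaka's formula to $(X^2_t-X^1_t)^+$; the $(\mathcal{LT})$ condition kills the local time and the jump-correction sums, leaving $(X^2_t-X^1_t)^+=\int_{0^+}^t \mathbf{1}_{(X^2_{s-}-X^1_{s-}>0)}\,d(X^2_s-X^1_s)$. Adding $X^1$ and using the pointwise identity $\mathbf{1}_{(X^2_{s-}-X^1_{s-}>0)}(\sigma(X^2_s)-\sigma(X^1_s))+\sigma(X^1_s)=\sigma(X^1_s\vee X^2_s)$ (and its analogues for $b$ and $F$) shows that $Y=X^1\vee X^2$ is itself a solution of \eqref{1}; symmetrically $Z=X^1\wedge X^2$ is a solution. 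This is where mere measurability and boundedness of $b$ suffice: no estimate on $b(X^1)-b(X^2)$ is ever required. Uniqueness in law then forces $Y$ and $Z$ to have the same one-dimensional distributions, hence $\mathbb{E}\left|X^1_t-X^2_t\right|=\mathbb{E}\left[Y_t\right]-\mathbb{E}\left[Z_t\right]=0$ for each $t$, and the c\`adl\`ag property of the paths upgrades this to $X^1_t=X^2_t$ for all $t$ a.s. The missing idea in your proposal is precisely this sup/inf construction, which converts the $(\mathcal{LT})$ condition into two new solutions of the \emph{same} equation, to which the weak-uniqueness hypothesis can be applied directly.
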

\begin{proof}
[Proof of theorem 11]
Let $X^1$ and $X^2$ two solutions of equation \eqref{1}. We will prove that $Y=X^1 \vee X^2$  and $Z=X^1 \wedge X^2$ are  solution of \eqref{1}. We obtain by using Tanaka formula and  ($\mathcal{L}$T) condition ,

\mbox{}

$\begin{array}{lll}
\left(X^2_t-X^1_t\right)^+&=& \int^{t}_{0^+}\textbf{1}_{(X^2_{s-}-X^1_{s-}>0)}d(X^2_s-X^1_s) \\
\end{array}
$

\mbox{}

by using  the fact that $X^1 \vee X^2=X^1 +(X^2-X^1)^+$ we obtain

\mbox{}

$\begin{array}{lll}
X^1_t \vee X^2_t &=&X^1_0 + \int^{t}_{0}\left[\textbf{1}_{(X^2_{s-}-X^1_{s-}>0)}(\sigma (X^2_s)-\sigma(X^1_s))+\sigma(X^1_s)\right]dW_s \\& &\\&+& \int^{t}_{0}  \left[\textbf{1}_{(X^2_{s-}-X^1_{s-}>0)}(b(X^2_s)-b(X^1_s))+ b(X^1_s)\right]ds \\& &\\&+& \int^{t}_{0^+}\int  \left[\textbf{1}_{(X^2_{s-}-X^1_{s-}>0)}(F(X^2_s,z)-F(X^1_s,z))+ F(X^1_s,z)\right](\mu-\nu)(ds,dz)\\& &\\&=& X^1_0 + \int^{t}_{0}\sigma (X^1_s \vee X^2_s)dW_s + \int^{t}_{0}  b(X^1_s \vee X^2_s)ds  \\& &\\&+& \int^{t}_{0^+}\int F((X^1 \vee X^2)_{s-},z)(\mu - \nu)(ds,dz)\\&\\
\end{array}
$

\mbox{}

Then $Y$ is a solution of \eqref{1}.\\
We have on the other hand,\\
$
\begin{array}{lll}
\left(X^1_t-X^2_t\right)^+&=& \int^{t}_{0^+}\textbf{1}_{(X^1_{s-}-X^2_{s-}>0)}d(X^1_s-X^2_s) \\
\end{array}
$

\mbox{}

by using  the fact that $X^1 \wedge X^2=X^1 -(X^1-X^2)^+$ we obtain by the same way that

\mbox{}

$
\begin{array}{lll}
X^1_t \wedge X^2_t&=& X^1_0 + \int^{t}_{0}\sigma (X^1_s \wedge X^2_s)dW_s + \int^{t}_{0}  b(X^1_s \wedge X^2_s)ds  \\& &\\&+& \int^{t}_{0^+}\int F((X^1 \wedge X^2)_{s-},z)(\mu - \nu)(ds,dz)\\
\end{array}
$

\mbox{}

Then $Z$ is a solution of \eqref{1}.\\
Finally, we have for all $t\geq 0$
	\[\Bbb{E}\left[\left|X^1_t-X^2_t\right|\right]=\Bbb{E}\left[X^1_t\vee X^2_t\right]-\Bbb{E}\left[X^1_t\wedge X^2_t\right]
\]
and  by using the  uniqueness in the sense of probability law, we obtain
	\[\Bbb{E}\left[\left|X^1_t-X^2_t\right|\right]=0
\]
Since $X^1$ and $X^2$ are càdlàg, hence $X^1_t=X^2_t$ for all $t\geq 0$ a.s.
\end{proof}

\mbox{}
This allows us to give a generalisation of Bass's result \cite{Bass 1}, we have the following:

\mbox{}
\begin{theorem}
If $(\sigma,F)$ verify ($\mathcal{LT}$) condition and $b$ is measurable and bounded. Moreover we suppose that:
\begin{itemize}
	\item $\sigma$ is bounded and continuous and strictly positive
	\item $x \rightarrow \int_A \frac{\left|z\right|^2}{1+\left|z\right|^2}F(x,z)\lambda(dz)$ is bounded and continuous for each $A \subset \Bbb{R}-\left\{0\right\}$
	\end{itemize}
Then there exists a solution to \eqref{1} and that solution is pathwise unique.
\end{theorem}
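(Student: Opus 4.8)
The plan is to separate the statement into its two parts and to dispose of uniqueness first, since it is immediate. Because $(\sigma,F)$ satisfy the $(\mathcal{LT})$ condition, Theorem 5 applies directly and gives that any two solutions of \eqref{1} on the same stochastic basis, driven by the same $W$ and $\mu$ and started at the same point, coincide for all $t\geq 0$. Thus pathwise uniqueness is granted, and the entire difficulty lies in exhibiting at least one solution.

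For existence I would first produce a weak solution and then, if a solution adapted to the prescribed data is required, upgrade it to a strong one by the Yamada--Watanabe principle. The weak solution is obtained from the martingale problem attached to the generator
\[
Lf(x) = \frac{1}{2}\sigma^2(x)f''(x) + b(x)f'(x) + \int \left[ f(x+F(x,z)) - f(x) - f'(x)F(x,z)\right]\lambda(dz),
\]
acting on $f\in C_b^2(\Bbb{R})$. The two displayed hypotheses are exactly the regularity needed to make $L$ well behaved: $\sigma$ bounded, continuous and strictly positive controls the diffusion part, while boundedness and continuity of $x\mapsto \int_A \frac{|z|^2}{1+|z|^2}F(x,z)\lambda(dz)$ for each $A\subset \Bbb{R}\setminus\{0\}$ controls the nonlocal part. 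Following Bass \cite{Bass 1}, I would regularize the coefficients to get a sequence of better-behaved SDEs, solve each, establish tightness of the laws of the approximating solutions from the boundedness of the coefficients, and pass to the limit; the continuity of $\sigma$ and of the truncated jump integral ensures that the limiting law solves the martingale problem and hence yields a weak solution of \eqref{1}, together with well-posedness in law.

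With a weak solution in hand, the conclusion follows by combining the two ingredients. One route is to pass directly from the uniqueness in law supplied by Bass \cite{Bass 1} to pathwise uniqueness via Theorem 11 (whose hypotheses, namely $(\mathcal{LT})$ and $b$ measurable and bounded, are met here), which is consistent with Theorem 5. To obtain a genuine solution on the given filtration I would then invoke the Yamada--Watanabe theorem in its jump-diffusion form: weak existence plus pathwise uniqueness forces existence of a strong solution. This delivers a solution driven by the prescribed $W$ and $\mu$ and completes the existence assertion.

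The main obstacle is the weak-existence step, and inside it the jump term in particular. The diffusion part is handled by the classical Stroock--Varadhan scheme once $\sigma$ is continuous and bounded away from $0$, but passing to the limit in the nonlocal operator requires the convergence of $\int [f(x_n+F(x_n,z)) - f(x_n) - f'(x_n)F(x_n,z)]\lambda(dz)$ as $x_n$ converges, and this is precisely where the assumed continuity of $x\mapsto \int_A \frac{|z|^2}{1+|z|^2}F(x,z)\lambda(dz)$ enters. Care must be taken with the compensation near $z=0$ and with the tightness of the associated jump measures, so that no jump mass is lost in the limit and the candidate limit genuinely solves the martingale problem.
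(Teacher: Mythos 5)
Your overall architecture --- weak existence via Bass's martingale-problem construction under the two displayed hypotheses, uniqueness in law from the same source, an upgrade to pathwise uniqueness, and Yamada--Watanabe to produce a strong solution --- is exactly what the paper intends (it gives no written proof of this theorem, presenting it as ``a generalisation of Bass's result'' enabled by Theorem 11). But your opening move contains a genuine error: you claim that Theorem 5 ``applies directly'' and settles pathwise uniqueness before any existence work. Theorem 5 is stated and proved under the standing assumptions of Section 3, in particular assumption (B) that $b$ is Lipschitz; its proof bounds the drift term by $c\int_0^t |X^1_s-X^2_s|\,ds$ and closes with Gronwall, which is impossible when $b$ is merely measurable and bounded, as it is in the present theorem. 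So Theorem 5 is not available here, and pathwise uniqueness is not ``immediate.''

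The correct route --- which you do mention, but only as an aside ``consistent with Theorem 5'' --- is the one the paper actually relies on: Bass's result supplies uniqueness in the sense of probability law, and Theorem 11 (which needs only the $(\mathcal{LT})$ condition and $b$ measurable and bounded, precisely because it replaces the Gronwall step by showing $X^1\vee X^2$ and $X^1\wedge X^2$ are both solutions and comparing their expectations via equality of laws) converts that into pathwise uniqueness. You should promote this from a parenthetical alternative to the main argument and drop the appeal to Theorem 5. With that correction your proof coincides with the paper's implicit one, fleshed out: your discussion of the martingale problem, tightness, and the role of the continuity of $x\mapsto\int_A \frac{|z|^2}{1+|z|^2}F(x,z)\lambda(dz)$ in passing to the limit in the nonlocal term supplies detail the paper omits and is sound as a sketch, modulo the usual care you already flag about compensation near $z=0$.
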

\mbox{}

If we set $F(x,z)=\frac{1}{\left|z\right|^{1+\alpha(x)}}$ and $\sigma = b= 0$ in the equation \eqref{1}, we obtain the stable-like process with the operator

	\[
	\mathcal{L}f(x)=\int  \left[f(x+z)-f(x)-\textbf{1}_{(\left|z\right|\leq 1)}f'(x)z\right] \frac{1}{\left|z \right|^{1+\alpha(x)}}dz
 \]
We have the following proposition
 \begin{proposition}
 If the function $\alpha$ is Dini continuous, bounded above by a constant less than 2 and bounded bellow by a constant greater than 0  and is increasing, then pathwise uniqueness of solution of stochastic differential equation driven by stable-like process associated to $F$.
\end {proposition}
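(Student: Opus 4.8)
The plan is to realise the stable-like process as a solution of an equation of the form \eqref{1} with $\sigma\equiv 0$, $b\equiv 0$, and a jump coefficient $F$ whose image of Lebesgue measure under $F(x,\cdot)$ is the position-dependent stable measure $|z|^{-1-\alpha(x)}\,dz$, and then to run it through the machinery developed in the previous sections: first reduce to the small jumps by Lemma 3, then secure the two inputs of Theorem 11, namely the $(\mathcal{LT})$ condition and uniqueness in law, and finally invoke Theorem 11 (with $b\equiv 0$, which is trivially measurable and bounded) to pass from uniqueness in law to pathwise uniqueness. In this scheme the Dini continuity and the two-sided bound $0<\underline{\alpha}\le\alpha\le\overline{\alpha}<2$ are the hypotheses that feed uniqueness in law and the integrability condition (C), while the monotonicity of $\alpha$ is what produces the $(\mathcal{LT})$ condition.

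For uniqueness in law I would quote Bass's theorem for stable-like operators \cite{Bass 2}: when $\alpha$ is Dini continuous and takes values in a compact subinterval of $(0,2)$, the martingale problem for $\mathcal{L}$ is well posed, so any two solutions of \eqref{1} issued from the same point have the same law. This is the sole step where Dini continuity and the strict bounds on $\alpha$ are used, and it is imported wholesale rather than reproved.

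The heart of the argument is the $(\mathcal{LT})$ condition, which I would extract from the increasing character of $\alpha$ via the monotone-coupling mechanism of Corollary 6. Since $\sigma\equiv 0$, the Yamada--Watanabe hypothesis on $\sigma$ there is vacuous (take $h(u)=u$), so everything reduces to showing that $x\mapsto x+F(x,z)$ is nondecreasing for the retained (small) jumps, $\lambda(dz)$-almost everywhere; by the Tanaka computation in the proof of Corollary 6 this kills the two jump sums and yields $\mathcal{L}^0_t(X^1-X^2)=0$. Writing the positive jumps through the inverse tail $F(x,u)=(\alpha(x)u)^{-1/\alpha(x)}$, a direct computation gives $\partial_\alpha\log F(x,u)=\alpha^{-2}\bigl(\log(\alpha u)-1\bigr)$, which is strictly positive exactly in the small-jump regime where $\alpha u$ is large (equivalently $F$ is small); since $\alpha$ is increasing in $x$, the jump coefficient is then increasing in $x$ and order is preserved. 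Crucially, the sign of $\partial_\alpha\log F$ is reversed for the large jumps, so the reduction to small jumps is essential: using the freedom in the truncation level of Lemma 3 together with the lower bound $\underline{\alpha}>0$, one can fix a threshold $\delta$ small enough that every retained jump lies in the regime $\alpha u>e$, uniformly in $x$.

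The main obstacle is precisely this monotonicity step. Two difficulties have to be handled with care. First, one must pin down a representation $F$ of the position-dependent symmetric stable measure and verify order-preservation on both tails; the downward tail is the delicate one, since there one needs the variation $|F(x_1,z)|-|F(x_2,z)|$ of the jump magnitude to be dominated by the gap $x_1-x_2$, and this must be controlled using only the Dini (not Lipschitz) modulus of continuity of $\alpha$ together with the smallness of $F$ and of its logarithmic derivative on the retained range. Second, once $(\mathcal{LT})$ is in hand, Theorem 11 finishes the argument by realising $X^1\vee X^2$ and $X^1\wedge X^2$ as solutions of \eqref{1}, concluding from uniqueness in law that they share the same one-dimensional marginals, whence $\Bbb{E}|X^1_t-X^2_t|=\Bbb{E}[X^1_t\vee X^2_t]-\Bbb{E}[X^1_t\wedge X^2_t]=0$ and the two solutions coincide.
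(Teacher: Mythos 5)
Your architecture (reduce to small jumps via Lemma 3, import uniqueness in law from Bass, extract the $(\mathcal{LT})$ condition from the monotonicity of $\alpha$ via Corollary 6, and conclude through Theorem 11) is exactly the route the paper intends: its entire proof is the single sentence that the result ``is a consequence of Bass result (p.13)'', placed immediately after Theorem 12, so you have correctly reverse-engineered the skeleton. The uniqueness-in-law step and the positive-jump computation are fine: with the tail-inversion representation $F(x,u)=(\alpha(x)u)^{-1/\alpha(x)}$ your formula $\partial_\alpha\log F=\alpha^{-2}(\log(\alpha u)-1)$ is correct, and on the retained range $\alpha u>e$ the coefficient $F$ is increasing in $x$, so $x\mapsto x+F(x,u)$ is trivially nondecreasing there.

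The genuine gap is the negative-jump half of the $(\mathcal{LT})$ verification, which you flag as ``delicate'' but do not close, and which in fact cannot be closed under the stated hypotheses. For $z<0$ the same computation gives that $|F(x,u)|$ is \emph{increasing} in $\alpha$, hence $F=-|F|$ is \emph{decreasing} in $x$, and monotonicity of $x\mapsto x+F(x,u)$ for $x_1<x_2$ requires the quantitative bound $|F(x_2,u)|-|F(x_1,u)|\le x_2-x_1$. The increment is of order $\sup_u\bigl(|F|\,\alpha^{-2}|\log(\alpha|u|)-1|\bigr)\cdot|\alpha(x_2)-\alpha(x_1)|$; the prefactor is indeed bounded (and can even be made small by shrinking the truncation level), but Dini continuity only yields $|\alpha(x_2)-\alpha(x_1)|\le\rho(x_2-x_1)$ with $\int_0^1\rho(h)h^{-1}\,dh<\infty$, and such a modulus may satisfy $\rho(h)/h\to\infty$ (e.g.\ $\rho(h)=(\log(1/h))^{-2}$), so the required linear domination fails no matter how the truncation is chosen. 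Nor can a cleverer representation of the kernel rescue this: any $F(x,\cdot)$ pushing Lebesgue measure onto $|z|^{-1-\alpha(x)}dz$ must displace, between $x_1$ and $x_2$, an amount of negative-jump mass controlled from below by $|\alpha(x_1)-\alpha(x_2)|$, which again exceeds $x_2-x_1$ for non-Lipschitz Dini moduli. So as written your proof establishes the proposition only for the spectrally one-sided (positive-jump) version of the operator, or for $\alpha$ Lipschitz; for the symmetric stable-like process with merely Dini continuous increasing $\alpha$ the sum $\sum\mathbf{1}_{(X^1_{s-}-X^2_{s-}\le0)}(X^1_s-X^2_s)^+$ coming from the negative jumps is not killed by your argument, and a different mechanism (or a direct appeal to the pathwise statement actually proved in Bass's survey) is needed there.
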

\mbox{}
The proof is a consequence of Bass result (\cite{Bass 1},p.13).
\mbox{}
\mbox{}

\begin{theorem}

 Suppose that for $i=1,2$, $X^i$ satisfies:

 \begin{equation}
		dX^i_t=\sigma(X^i_t)dW_t+ b_i(X^i_t)dt + \int F(X^i_{t-},z)(\mu-\nu)(dt,dz) \label{12}
\end{equation}

 where $\sigma,F,b_1,b_2,$ are bounded measurable functions. Assume that:

\begin{itemize}
	\item $(\sigma,F)$ verify ($\mathcal{LT}$) condition
	\item One of the two functions $b_1,b_2$ is Lipschitz
\end{itemize}
Assume further that:
\begin{enumerate}
	\item $b_1 \leq b_2$
	\item $X^1_0 \leq X^2_0$
	\end{enumerate}
Then: $X^1_t \leq X^2_t$ for all $t$ a.s.

\end{theorem}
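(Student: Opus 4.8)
The plan is to mimic the local-time argument of Corollary 6 and Theorem 11, controlling the positive part $(X^1_t-X^2_t)^+$ and closing with Gronwall's lemma. Set $D_t=X^1_t-X^2_t$; since $X^1_0\le X^2_0$ we have $D_0^+=0$. Subtracting the two equations,
\[
dD_t=[\sigma(X^1_t)-\sigma(X^2_t)]\,dW_t+[b_1(X^1_t)-b_2(X^2_t)]\,dt+\int[F(X^1_{t-},z)-F(X^2_{t-},z)](\mu-\nu)(dt,dz).
\]
Applying the slanted It\^o--Tanaka formula of the Preliminaries to the convex function $x\mapsto x^+$, whose second derivative is the Dirac mass at $0$, yields
\[
(X^1_t-X^2_t)^+=\int_{0^+}^t\mathbf{1}_{(X^1_{s-}-X^2_{s-}>0)}\,dD_s+\tfrac12\mathcal{L}^0_t(X^1-X^2).
\]

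The next step is to discard the local-time term. Although the $(\mathcal{LT})$ condition is stated for two solutions of one and the same equation, its verification in Proposition 4 rests only on hypotheses (c) and (d), which involve $\sigma$ and $F$ alone; the drift enters that proof solely through a bounded finite-variation term $\int\phi_n'(D_{s-})(b_1(X^1_{s-})-b_2(X^2_{s-}))\,ds$ which converges harmlessly and plays no role in forcing the local time to vanish. Hence $\mathcal{L}^0_t(X^1-X^2)=0$ even though $X^1$ and $X^2$ solve equations with the two distinct drifts $b_1,b_2$. Substituting the dynamics of $D$ then gives
\[
(X^1_t-X^2_t)^+=\int_{0^+}^t\mathbf{1}_{(X^1_{s-}-X^2_{s-}>0)}[\sigma(X^1_s)-\sigma(X^2_s)]\,dW_s+\int_0^t\mathbf{1}_{(X^1_{s-}-X^2_{s-}>0)}[b_1(X^1_s)-b_2(X^2_s)]\,ds
\]
\[
+\int_0^t\!\int\mathbf{1}_{(X^1_{s-}-X^2_{s-}>0)}[F(X^1_{s-},z)-F(X^2_{s-},z)](\mu-\nu)(ds,dz).
\]

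Finally I would localize along $\tau_m=\inf\{t:|X^1_t|\ge m\text{ or }|X^2_t|\ge m\}$, take expectations, and use the boundedness of $\sigma$ and $F$ to kill the Brownian and compensated-Poisson integrals, which are martingales. This leaves
\[
\mathbb{E}\big[(X^1_{t\wedge\tau_m}-X^2_{t\wedge\tau_m})^+\big]\le\mathbb{E}\Big[\int_0^{t\wedge\tau_m}\mathbf{1}_{(X^1_{s-}-X^2_{s-}>0)}(b_1(X^1_s)-b_2(X^2_s))\,ds\Big].
\]
Assuming, say, that $b_2$ is Lipschitz with constant $c$ (the case of $b_1$ Lipschitz being symmetric), on $\{X^1_s>X^2_s\}$ I split $b_1(X^1_s)-b_2(X^2_s)=[b_1(X^1_s)-b_2(X^1_s)]+[b_2(X^1_s)-b_2(X^2_s)]\le c\,(X^1_s-X^2_s)^+$, the first bracket being $\le0$ by $b_1\le b_2$ and the second controlled by the Lipschitz bound (the integrand being, $ds$-a.e., supported on $\{X^1_s>X^2_s\}$). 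This produces the Gronwall inequality $\mathbb{E}[(X^1_{t\wedge\tau_m}-X^2_{t\wedge\tau_m})^+]\le c\int_0^t\mathbb{E}[(X^1_{s\wedge\tau_m}-X^2_{s\wedge\tau_m})^+]\,ds$, whence the left side is $0$; letting $m\to\infty$ and using right-continuity of the paths gives $X^1_t\le X^2_t$ for all $t$ a.s. The delicate point, and the step I expect to need the most care, is precisely the invocation of $(\mathcal{LT})$ for solutions of two different equations: one must confirm that the slanted local time at $0$ of $X^1-X^2$ is insensitive to the difference of drifts, so that the hypothesis---genuinely a property of the pair $(\sigma,F)$---still forces $\mathcal{L}^0_t(X^1-X^2)=0$.
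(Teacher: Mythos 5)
Your proposal is correct and follows essentially the same route as the paper: Tanaka's formula for $x\mapsto x^{+}$ applied to $X^1-X^2$, the $(\mathcal{LT})$ condition to annihilate the local-time and jump-correction terms (which you package as $\tfrac12\mathcal{L}^0_t(X^1-X^2)$ where the paper writes the two jump sums plus $\tfrac12 L^0_t$ separately), and then the split $b_1(X^1)-b_2(X^2)=[b_1(X^1)-b_2(X^1)]+[b_2(X^1)-b_2(X^2)]$ on $\{X^1>X^2\}$ followed by Gronwall. You are in fact more careful than the paper on the one delicate point you flag --- that $(\mathcal{LT})$ is defined for two solutions of the \emph{same} equation, whereas here the drifts differ --- which the paper's proof simply asserts without comment.
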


\begin{proof}
[Proof of the Theorem 14]

Let $X_i, i=1,2$ two solutions of equations \eqref{12}. By Tanaka formula we obtain

\mbox{}

\begin{equation}
\begin{array}{lll}
	\left(X^1_t-X^2_t\right)^{+}& = & \int^{t}_{0}\textbf{1}_{(X^1_{s-}-X^2_{s-}>0)}d(X^1_{s}-X^2_{s}) \\
	& &\\
	 & + & \sum_{0<s\leq t}\textbf{1}_{(X^1_{s-}-X^2_{s-}>0)} \left(X^1_{s}-X^2_{s}\right)^{-}\\
	 & & \\
	&+& \sum_{0<s\leq t}\textbf{1}_{(X^1_{s-}-X^2_{s-}\leq 0)}\left(X^1_{s}-X^2_{s}\right)^{+}\\
	& & \\
	 & + & \frac{1}{2}L^0_t(X^1-X^2)\\	
\end{array}
\label{13}
\end{equation}

As $(\sigma,F)$ verify ($\mathcal{LT}$) condition, the third, the fourth and the fifth terms in the right hand sides are zero. Using the same argument as before, we find
	\[\Bbb{E}\left[\left(X^1_t-X^2_t\right)^{+}\right]\leq c \int^{t}_{0}\Bbb{E}\left[\left(X^1_t-X^2_t\right)^{+}\right]ds
\]
which implies that $X^1_t\leq X^2_t$ for all $t\geq 0$ a.s.(since $X^1$ and $X^2$ are càdlàg).
\end{proof}

\begin{theorem}

 Suppose that for $i=1,2$, $X^i$ satisfies:

 \begin{equation}
		dX^i_t=\sigma(X^i_t)dW_t+ b_i(X^i_t)dt + \int F_i(X^i_{t-},z)(\mu-\nu)(dt,dz) \label{14}
\end{equation}

 where $\sigma,F_1,F_2,b_1,b_2,$ are bounded measurable functions. Assume that:

\begin{itemize}
	\item $\sigma$ verify $(LT)$ condition
	\item One of the two functions $b_1,b_2$ is Lipschitz
\end{itemize}
Assume further that:
\begin{enumerate}
	\item $b_1 \leq b_2$
	\item $X^1_0 \leq X^2_0$
	\item $x_1+F_1(x_1,z)\leq x_2+F_2(x_2,z)$ for all $x_1\leq x_2$ .
	\end{enumerate}
Then: $X^1_t \leq X^2_t$ for all $t$ a.s.

\end{theorem}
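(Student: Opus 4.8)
The plan is to mirror the proofs of Theorem 14 and Corollary 6: apply the Tanaka formula to the positive part $Y^+=(X^1-X^2)^+$, show that each correction term either vanishes or can be fed into a Gronwall estimate, and conclude. Writing $Y=X^1-X^2$, the Itô--Tanaka formula for $y\mapsto y^+$ gives
\[
Y_t^+=\int_0^t \textbf{1}_{(Y_{s-}>0)}\,dY_s+\sum_{0<s\le t}\textbf{1}_{(Y_{s-}>0)}(Y_s)^- +\sum_{0<s\le t}\textbf{1}_{(Y_{s-}\le 0)}(Y_s)^+ +\tfrac12 L_t^0(Y),
\]
with $Y_0^+=0$ by hypothesis (2). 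Substituting
\[
dY_s=(\sigma(X^1_s)-\sigma(X^2_s))\,dW_s+(b_1(X^1_s)-b_2(X^2_s))\,ds+\int (F_1(X^1_{s-},z)-F_2(X^2_{s-},z))(\mu-\nu)(ds,dz)
\]
splits the first integral into a continuous local martingale, a drift, and a compensated jump martingale.

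First I would dispose of the local-time term: since $\sigma$ satisfies the (LT) condition, $L_t^0(Y)=0$ for all $t$, exactly as in the continuous theory of Le Gall and Ouknine. Next I would kill the second jump sum using hypothesis (3): on $\{Y_{s-}\le 0\}$, i.e. $X^1_{s-}\le X^2_{s-}$, assumption (3) with $x_1=X^1_{s-}$, $x_2=X^2_{s-}$ gives $X^1_{s-}+F_1(X^1_{s-},z)\le X^2_{s-}+F_2(X^2_{s-},z)$, that is $Y_s\le 0$, so $(Y_s)^+=0$ and the third term vanishes identically. This is the exact analogue of the computation \eqref{9} in Corollary 6, where the monotonicity of $x\mapsto x+F(x,z)$ was used.

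The first jump sum, carried over $\{Y_{s-}>0\}$, is the delicate point and the main obstacle. Because here $F_1\neq F_2$, assumption (3) only controls the pair $x_1\le x_2$ and does \emph{not} force $Y_s\ge 0$ when $X^1_{s-}>X^2_{s-}$, so this sum need not be pointwise zero — in contrast with Theorem 14, where the stronger $(\mathcal{LT})$ hypothesis made \emph{both} sums vanish automatically through the nonnegative splitting $\mathcal L^0=L^0+2(\text{sum}_1+\text{sum}_2)$. My plan is to rewrite it as
\[
\int_0^t\!\!\int \textbf{1}_{(Y_{s-}>0)}\big(Y_{s-}+F_1(X^1_{s-},z)-F_2(X^2_{s-},z)\big)^-\mu(ds,dz),
\]
combine it with the compensated jump integral above, and split $\mu=(\mu-\nu)+\nu$: the $(\mu-\nu)$-part is a zero-mean martingale, and the compensator leaves a nonnegative remainder that I must control by (3). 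The safe route I would actually run is a first-passage argument: set $T=\inf\{t:Y_t>0\}$. Condition (3) shows (by the second-sum computation above applied at $T$) that no jump can carry $Y$ from $\{\le 0\}$ to $\{>0\}$, while $L^0(Y)=0$ rules out a continuous upcrossing of $0$; hence $T=\infty$, the event $\{Y_{s-}>0\}$ is never realised, and the first sum never activates.

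Finally, on $\{Y_{s-}>0\}$ the drift obeys $b_1(X^1_s)-b_2(X^2_s)\le b_2(X^1_s)-b_2(X^2_s)\le c\,|X^1_s-X^2_s|$ by $b_1\le b_2$ and the Lipschitz hypothesis on $b_2$ (the case of $b_1$ Lipschitz being symmetric). Taking expectations, the martingale terms drop out and the surviving terms give
\[
\Bbb{E}\big[(X^1_t-X^2_t)^+\big]\le c\int_0^t \Bbb{E}\big[(X^1_s-X^2_s)^+\big]\,ds.
\]
Gronwall's lemma then forces $\Bbb{E}[(X^1_t-X^2_t)^+]=0$, and right-continuity of the paths upgrades this to $X^1_t\le X^2_t$ for all $t\ge 0$ a.s. I expect the genuine technical work to be concentrated entirely in the first jump sum, where the interplay between the (LT) condition (barring continuous upcrossings) and the one-sided monotonicity (3) (barring jump-induced upcrossings) must jointly replace the single $(\mathcal{LT})$ hypothesis that trivialised this term in Theorem 14.
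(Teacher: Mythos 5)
Your decomposition is exactly the paper's: Tanaka's formula for $y\mapsto y^{+}$ applied to $Y=X^1-X^2$, the local time killed by the $(LT)$ condition, the jump sum over $\{Y_{s-}\le 0\}$ killed by hypothesis (3), and Gronwall applied to the drift via $b_1\le b_2$ and the Lipschitz assumption. The paper's own proof is one line: it asserts that \emph{both} jump sums in \eqref{13} vanish by hypothesis (3) and refers back to Theorem 14. You are right to be suspicious: hypothesis (3) only constrains the pair $(x_1,x_2)$ with $x_1\le x_2$, hence only annihilates the sum over $\{Y_{s-}\le 0\}$. The remaining sum $\sum_{0<s\le t}\mathbf{1}_{(Y_{s-}>0)}(Y_s)^{-}$ is nonnegative, enters the identity with a plus sign, and is not forced to vanish by (3): take $F_1\equiv 0$ and $F_2\equiv c>0$, for which (3) holds, yet a jump occurring while $0<Y_{s-}<c$ contributes a strictly positive term. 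So you have correctly located the one step the paper passes over in silence, and you cannot simply discard that term in the Gronwall estimate.

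However, your proposed repair does not close the gap. The first-passage argument rests on the claim that $L^0_t(Y)=0$ rules out a continuous upcrossing of $0$. This is false: the semimartingale local time at $0$ of $Y_t=t-1$ is identically zero although $Y$ crosses $0$, and a drift-driven continuous crossing is precisely the scenario that the hypothesis $b_1\le b_2$ is there to exclude. In Le Gall's continuous comparison theorem the crossing is not excluded pathwise; it is excluded \emph{a posteriori} by taking expectations, bounding the drift on $\{Y_s>0\}$ by $c\,\mathbb{E}\left[(Y_s)^{+}\right]$, and applying Gronwall. Here that route is obstructed by the leftover nonnegative sum over $\{Y_{s-}>0\}$, and your stopping-time detour does not remove it (after the first passage you face the identical problem on the post-$T$ interval). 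To make the argument rigorous one needs either an extra hypothesis ensuring jumps preserve the order from $\{Y_{s-}>0\}$ as well, or a Lipschitz-type control such as $\int\left|F_1(x_1,z)-F_2(x_2,z)\right|\lambda(dz)\le c\left|x_1-x_2\right|$ in the spirit of Proposition 8 so that the extra sum is absorbed into the Gronwall bound. As written, your proof and the paper's are both incomplete at the same spot; yours at least names the difficulty.
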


\begin{remark}

We can see that if $F_i,i=1,2$ satisfy the hypothesis of Corollary 4 ( or 5) , then we can replace the hypothesis 3. by
	\[F_1(x_1,z) \leq  F_2(x_2,z),\forall x_1 \leq x_2 .
\]
\end{remark}

\begin{remark}
If we take $F_1=F_2$ in the theorem 12, the assumption of the corollary 4 ( or 5) on $F$ is enough for the conclusion  of the theorem 14.
\end{remark}

\begin{remark}
We can remark that the condition 3 in the theorem 15 ensures that the paths of  $X_i$ do not cross at jump times: if $(s,z)$ is  an atom of $\mu$  and if $X^1_{s-}=x\leq X^2_{s-}=y $, then
	\[X^1_{s}=x + F_1(x,z)\leq y + F_2(y,z)=X^2_{s}
	\]
and this condition is necessary for comparaison theorem.
\end{remark}

\begin{proof}
[Proof of Theorem 15]

We apply the same method as before in the theorem 14, the third and the fourth terms in the right hand side of \eqref{13} is zero because of the assumption 3 and the fifth term is zero because the $\sigma$ verify $(LT)$ condition. The end of the proof is identical to that of the theorem 14.
\end{proof}
\medskip

\begin{remark}
We can remark that the theorem 15 generalizes the result of Peng and Zhu \cite{Peng} namely theorem 3.1.
\end{remark}


\section{Case of SDE driven by spectrally positive Lévy process}
In this section we consider a SDE driven by spectrally positive Lévy process. Let $Z_t$ a spectrally positive stable process whith exponent $\alpha \in (1,2)$ that is $Z$ can be written as
	\[Z_t=\int^{t}_{0}\int^{\infty}_{0}z (\mu(ds,dz)-\nu(ds,dz))
\]
where $\nu$ is a measure defined by
	\[\nu(ds,dz)=ds\left|z\right|^{-\alpha - 1}dz
\]
and $\mu(ds,dz)$ is a Poisson measure on $\left[0,\infty \right)\times \left.\right]0,\left.\infty \right)$.\\
We consider the stochastic differential equation
\begin{equation}
		dX_t=G(X_{t-})dZ_t \label{16}
\end{equation}
In paper of \cite{LiMytnik}, the authors prove pathwise uniqueness of equation \eqref{16} under restrictive assumptions.
We can prove the same result with less restrictives assumptions.

\begin{proposition}

Let $G$ be a non-decreasing function on $\Bbb {R}_+$ satisfying \\$G(0)=0$.\\
Then the pathwise uniqueness holds for solutions of \eqref{16}.
\end{proposition}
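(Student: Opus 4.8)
The plan is to realize \eqref{16} as a special case of the general jump equation \eqref{1} and then to invoke Corollary 6 together with Theorem 5. Writing $Z$ through its Poisson integral, \eqref{16} reads $dX_t=\int_0^\infty G(X_{t-})\,z\,(\mu-\nu)(dt,dz)$, so it is of the form \eqref{1} with $\sigma\equiv 0$, $b\equiv 0$ and $F(x,z)=G(x)z$; the only change with respect to Sections~2--3 is that the compensator is $ds\,z^{-1-\alpha}dz$ on $(0,\infty)$ instead of $ds\,\lambda(dz)$, and every local-time identity used below goes through with $\lambda(dz)$ replaced by this stable L\'evy measure. I would also first record that, since $Z$ is spectrally positive, $G\ge 0$ on $\Bbb{R}_+$ (because $G(0)=0$ and $G$ is non-decreasing) and $G(0)=0$, so solutions started in $\Bbb{R}_+$ stay in $\Bbb{R}_+$ and $G$ is only ever evaluated on its domain.

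The heart of the argument is to check the two hypotheses of Corollary 6 for the pair $(\sigma,F)=(0,F)$. For the coefficient $\sigma$, taking $h(u)=u$ gives $|\sigma(x)-\sigma(y)|=0\le h(|x-y|)$ with $\int_0^\varepsilon du/h^2(u)=\infty$; equivalently, $X^1-X^2$ has no continuous martingale part, so $[X^1-X^2]^c=0$ and hence $L^0_t(X^1-X^2)=0$. For the coefficient $F$, for each fixed $z>0$ the map $x\mapsto x+F(x,z)=x+G(x)z$ is non-decreasing, being the sum of the identity and the non-decreasing map $G(\cdot)z$. This is exactly the monotonicity hypothesis of Corollary 6: at an atom $(s,z)$ of $\mu$ we have $X^i_s=X^i_{s-}+G(X^i_{s-})z$, so $X^1_{s-}-X^2_{s-}$ and $X^1_s-X^2_s$ have the same sign, which annihilates the two jump sums in the Tanaka expansion of $(X^1_t-X^2_t)^+$. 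Hence $(\sigma,F)$ satisfies the $(\mathcal{LT})$ condition, and since $b\equiv0$ is trivially Lipschitz, Theorem 5 yields pathwise uniqueness.

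The one genuinely delicate point, which I expect to be the main obstacle, is integrability: since $\int_0^\infty z^2\,z^{-1-\alpha}dz=\infty$, the growth condition (C) fails for $F(x,z)=G(x)z$ and the stochastic integrals appearing in Theorem 5 are a priori only local martingales. I would remove this obstruction in the standard way. By Lemma 3 it suffices to prove uniqueness for the equation carrying only the small jumps, and I would localize at $\tau_m=\inf\{t:\,X^1_t\vee X^2_t\ge m\}$: on $[0,\tau_m]$ one has $G\le G(m)$ and $\int \textbf{1}_{(|G(x)z|<1)}\,|G(x)z|^2\,z^{-1-\alpha}dz\le c_m<\infty$, so the martingale terms become genuine martingales, the Tanaka--Gronwall computation of Theorem 5 applies on $[0,t\wedge\tau_m]$, and letting $m\to\infty$ gives $X^1\equiv X^2$.
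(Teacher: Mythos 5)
Your proposal follows essentially the same route as the paper: the paper's own proof is the single observation that $F(x,z)=zG(x)$ with $G$ non-decreasing makes $x\mapsto x+F(x,z)$ non-decreasing, so Corollary 6 gives the $(\mathcal{LT})$ condition and Theorem 5 concludes. Your additional care about the failure of the growth condition (C) for the stable L\'evy measure, handled via Lemma 3 and localization, fills a gap the paper passes over silently but does not change the argument.
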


\begin{proof}
We apply corollary 6 by noticing that the function $F$ in this case is written $F(x,z)=zG(x)$.The nondecreasing of $x\rightarrow G(x)$ implies the nondecreasing of $x \rightarrow x+F(x,z)$ a.e.$\lambda(dz)$.
\end{proof}
\begin{remark}
Our approach based on local time technic can be used for more general equation of type:
\begin{eqnarray*}
X_{t}=X_{0}+ \int_{0}^{t} \sigma(X_{s^{-}},u)\;W(ds,du)+ \int_{0}^{t}b(X_{s^{-}})\;ds\\
+ \int_{0}^t \int_{|u| \leq 1}g_{0}(X_{s^{-}},u)\widetilde{N}_{0}(ds,du)+ \int_{0}^t \int_{u \geq 1}g_{1}(X_{s^{-}},u)N_{1}(ds,du)
\end{eqnarray*}
where
\begin{itemize}
  \item $\{W(ds,du)\}$ the white noise with intensity $ds \pi(dz)$, with $\pi$ is a $\sigma-$finite measure on $\mathbb{R}$.
  \item $N(ds,du)$ and $\widetilde{N}(ds,du)$ denote the Poisson random measures on $[0,\infty)\times [-1,1]$, $[0,\infty)\times [-1,1]^{c}$ respectively, defined on same probability space and are independent each of other.
  \item $\widetilde{N}_{0}(ds,du)$ denote the compensated measure of $N_{0}(ds,du)$.
\end{itemize}
This SDE was recently studied by Donald A. Dawson and Zenghu Li \cite{Dawson} and treated by Y. Ouknine \cite{Ouknine} in continuous case, it will be the subject of another paper.
\end{remark}
\end{spacing}

\end{document}